\documentclass[11pt]{article}
\usepackage{amsmath, amsfonts, amssymb, amsthm}

\setlength{\textwidth}{6.25in}
\setlength{\oddsidemargin}{0.125in}
\setlength{\textheight}{9in}
\setlength{\topmargin}{-.75in}


\newtheorem{thm}{Theorem}
\newtheorem{prop}[thm]{Proposition}
\newtheorem{lem}[thm]{Lemma}

\newtheorem{rem}[thm]{Remark}
\newtheorem{df}[thm]{Definition}
\newtheorem{ex}[thm]{Example}

\renewcommand{\epsilon}{\varepsilon}
\renewcommand{\phi}{\varphi}

\newcommand{\inv}{\operatorname{Inv}}

\begin{document}

\title{Geometric Properties of Conformal Transformations on $\mathbb{R}^{p,q}$}

\author{Matvei Libine\footnote{Department of Mathematics, Indiana University,
Rawles Hall, 831 East 3rd St, Bloomington, IN 47405}
 and Surya Raghavendran\footnote{Undergraduate student at
the University of Texas, Austin, TX 78712}}

\maketitle

\begin{abstract}
We show that conformal transformations on the
generalized Minkowski space $\mathbb{R}^{p,q}$ map hyperboloids and
affine hyperplanes into hyperboloids and affine hyperplanes.
We also show that this action on hyperboloids and affine hyperplanes is
transitive when $p$ or $q$ is $0$,
and that this action has exactly three orbits if $p, q \ne 0$.
Then we extend these results to hyperboloids and affine planes of
arbitrary dimension.
These properties generalize the well-known properties of M\"{o}bius
(or fractional linear) transformations on the complex plane
$\mathbb{C}$.
\end{abstract}

{\bf Keywords:}
conformal transformations on $\mathbb{R}^{p,q}$,
fractional linear transformations,
M\"obius transformations, M\"obius geometry.

\section{Introduction}
In this article we investigate geometric properties of conformal
transformations on the generalized Minkowski space $\mathbb{R}^{p,q}$.
Recall that $\mathbb{R}^{p,q}$ is the vector space $\mathbb{R}^{p+q}$
equipped with indefinite quadratic form
$$
Q(\overrightarrow{x})=(x_1)^2+\cdots+(x_p)^2-(x_{p+1})^2-\cdots-(x_{p+q})^2.
$$
The indefinite orthogonal group $O(p+1,q+1)$ acts on $\mathbb{R}^{p,q}$
by conformal transformations.
If $p+q>2$, in a certain sense, these are all possible conformal
transformations of $\mathbb{R}^{p,q}$.

When $p=2$ and $q=0$, this action reduces to the
well-familiar action of $SL(2,\mathbb{C})$ on the complex plane
$\mathbb{C} \simeq \mathbb{R}^{2,0}$ by M\"obius (or fractional linear)
transformations.
We are particularly interested in the facts that M\"obius transformations
on $\mathbb{C}$ map circles and lines into circles and lines and that
this action of $SL(2,\mathbb{C})$ on circles and lines is transitive.

In this paper we find analogues of these properties for the conformal action of
$O(p+1,q+1)$ on $\mathbb{R}^{p,q}$ with arbitrary $p$ and $q$.
A special case of quaternions $\mathbb{H} \simeq \mathbb{R}^{4,0}$
was worked out in \cite{BG,GL}.
Another previously known case is $\mathbb{R}^{1,1}$ \cite{YAG}.
In this case circles get replaced with hyperbolas.
These cases suggest that in the general case, where the quadratic form $Q$
is not necessarily positive definite, circles should be replaced with
hyperboloids with quadratic term proportional to $Q$.
Example \ref{R^{1,1}-ex} shows that it is necessary to work in the conformal
compactification $N^{p,q}$ of $\mathbb{R}^{p,q}$ and that it is also necessary
to find suitable analogues of hyperboloids and hyperplanes in $N^{p,q}$;
we call these analogues {\em conformal quadratic hypersurfaces}.
The key step is to define the conformal quadratic hypersurfaces
as intersections of $N^{p,q} \subset \mathbb{R}P^{p+q+1}$ with hyperplanes
in the real projective space $\mathbb{R}P^{p+q+1}$ (Definition \ref{def}).
This definition is very easy to work with, and we immediately conclude that
the group $O(p+1,q+1)$ acting on $N^{p,q}$ maps these conformal quadratic
hypersurfaces into themselves.
Moreover, this action of $O(p+1,q+1)$ on the conformal quadratic hypersurfaces
is transitive if $p$ or $q$ is $0$, this action has exactly three orbits when
$p,q \ne 0$, and we give a simple necessary and sufficient condition for two
such hypersurfaces to be in the same orbit (Theorem \ref{main-thm}).
Then we extend this result to lower-dimensional hyperboloids and affine planes
(Theorems \ref{main-thm2} and \ref{main-thm3}).

Theorems \ref{main-thm}, \ref{main-thm2} and \ref{main-thm3} can be regarded
as Clifford algebra analogues of the case of complex plane $\mathbb{C}$.
In particular, suppose that $p=0$, i.e. that the quadratic form $Q$ is negative
definite. Then $N^{0,q}$ is the one-point compactification of $\mathbb{R}^q$.
Let ${\cal C}_q$ denote the Clifford algebra associated to $(\mathbb{R}^q, Q)$.
In this case, the conformal transformations on $N^{0,q}$ produced by
$O(1,q+1)$ can be expressed as fractional linear transformations
$$
\overrightarrow{x} \mapsto
(a \overrightarrow{x} + b) (c \overrightarrow{x} +d)^{-1},
$$
where $\bigl(\begin{smallmatrix} a & b \\ c & d \end{smallmatrix}\bigr)$
are the so-called Vahlen matrices with entries $a,b,c,d$ in the Clifford group
$$
\Gamma_q = \{ y \in {\cal C}_q ;\: y \text{ is invertible in } {\cal C}_q \}
$$
(see, for example, \cite{V,A2}).
Theorem \ref{main-thm2} asserts that these transformations
map $d$-dimensional spheres and affine planes in $\mathbb{R}^q$ into
spheres and affine planes of the same dimension, where $d=1,\dots, q-1$.
Furthermore, the group $O(1,q+1)$ acts transitively on the set of
$d$-dimensional spheres and affine planes in $\mathbb{R}^q$.

We would like to mention a connection between this geometric work,
representation theory and solutions of the ultrahyperbolic wave equation. Let
$$
\square_{p,q} = \frac{\partial^2}{(\partial x_1)^2} + \dots
+ \frac{\partial^2}{(\partial x_p)^2} - \frac{\partial^2}{(\partial x_{p+1})^2}
- \dots - \frac{\partial^2}{(\partial x_{p+q})^2}
$$
be the ultrahyperbolic wave operator with symbol $Q(\overrightarrow{x})$.
The conformal action of $O(p+1,q+1)$ on $\mathbb{R}^{p,q}$ lifts to a
meromorphic linear action on the space of solutions of the 
wave equation $\square_{p,q} \phi=0$.
When $p+q$ is even and $\ge 6$, the groups $O(p+1,q+1)$'s have certain
unitary representations called {\em minimal representations}.
These representations have been studied by many authors, and \cite{KO}
is a particularly comprehensive work on this subject.
In this work the authors -- among other things -- use the linear action of
$O(p+1,q+1)$ on the space of solutions of $\square_{p,q} \phi=0$ to construct
a concrete realization of the minimal representation of $O(p+1,q+1)$, then
in Theorem 6.2 in Part III of \cite{KO} they describe the
$O(p+1,q+1)$-invariant inner product, which is expressed by a certain integral
over a conformal quadratic hypersurface in $N^{p,q}$ (with non-zero sign).
The independence of the choice of a conformal quadratic hypersurface is
essentially a manifestation of the unitarity of the minimal representation
and the fact that any conformal quadratic hypersurface can be mapped into any
other conformal quadratic hypersurface of the same sign by an element of
$O(p+1,q+1)$ (Theorem \ref{main-thm}).
As it was discussed in Section 6 of Part III of \cite{KO}, the property that
this inner product is independent of the choice of a conformal quadratic
hypersurface leads to a remarkable ``conservation law'' for the solutions
of the ultrahyperbolic equation $\square_{p,q} \phi=0$.

The paper is organized as follows.
In Section \ref{S2} we fix our notation and review the properties of the
conformal compactification $N^{p,q}$ of $\mathbb{R}^{p,q}$.
In Section \ref{S3} we discuss the action of $O(p+1,q+1)$ on the set
of hyperboloids and hyperplanes in $\mathbb{R}^{p,q}$ and give
Example \ref{R^{1,1}-ex}. This example demonstrates that it is necessary to
work in the conformal compactification $N^{p,q}$ rather than $\mathbb{R}^{p,q}$
and that it is also necessary to find suitable analogues of hyperboloids and
hyperplanes in $N^{p,q}$.
In Section \ref{S4} we find these analogues and give our key
Definition \ref{def} of conformal quadratic hypersurfaces in $N^{p,q}$.
In Section \ref{S5} we prove our main results
(Theorems \ref{main-thm}, \ref{main-thm2} and \ref{main-thm3}).
Let $\hat{\cal H}_d$ denote the set of surfaces in $N^{p,q}$ that serve as
analogues of hyperboloids and affine planes of dimension $d$.
Then Theorems \ref{main-thm}, \ref{main-thm2} and \ref{main-thm3}
state that the group $O(p+1,q+1)$ acting on $N^{p,q}$ preserves each
$\hat{\cal H}_d$ and also provide necessary and sufficient conditions for
two elements in $\hat{\cal H}_d$  to be in the same $O(p+1,q+1)$-orbit.

This paper was written as a part of REU (research experiences for
undergraduates) project at Indiana University during Summer 2014.
We would like to thank Professors Chris Connell and Larry Moss,
and the NSF for providing the organization and funding for the REU program
that made this project possible.
This REU program was supported by the NSF grant DMS-1156515.

\section{The Conformal Compactification of $\mathbb{R}^{p,q}$}  \label{S2}

\subsection{The General Case}

In this section we fix our notation and review the properties of
the conformal compactification $N^{p,q}$ of $\mathbb{R}^{p,q}$ as well as
the action of $O(p+1,q+1)$ on $N^{p,q}$ by conformal transformations.
The reader may wish to refer to, for example, \cite{SCH, KO, K}
for more detailed expositions of the results stated in this section.

Let $p,q = 0,1,2,\dots$ with $p+q \ge 2$, and let $\mathbb{R}^{p,q}$ denote
the vector space $\mathbb{R}^{p+q}$ equipped with indefinite quadratic form
$$
Q(\overrightarrow{x}) = (x_1)^2+\dots+(x_p)^2-(x_{p+1})^2-\dots-(x_{p+q})^2.
$$
We denote by
\begin{align*}
B(\overrightarrow{x},\overrightarrow{y})&=
\frac12 \bigl[ Q(\overrightarrow{x}+\overrightarrow{y})
- Q(\overrightarrow{x}) - Q(\overrightarrow{y}) \bigr] \\
&=x_1y_1+\dots+x_py_p-x_{p+1}y_{p+1}-\dots -x_{p+q}y_{p+q},
\end{align*} 
the unique bilinear form obtained by polarizing $Q$.
Associated with the quadratic form is the indefinite orthogonal group
consisting of all invertible matrices that preserve $Q$
$$
O(p,q)= \bigl\{ M\in GL(p+q,\mathbb{R}) ;\:
Q(M\overrightarrow{x})=Q(\overrightarrow{x})\text{ for all }
\overrightarrow{x} \in \mathbb{R}^{p,q} \bigr\}.
$$

By a {\em conformal transformation} of $\mathbb{R}^{p,q}$, we mean a
smooth mapping $\phi: U \to \mathbb{R}^{p,q}$, where
$U \subset \mathbb{R}^{p,q}$ is a non-empty connected open subset, such that
$$
\phi^*B(\overrightarrow{x},\overrightarrow{y})
= \Omega^2 \cdot B(\overrightarrow{x},\overrightarrow{y})
$$
for some smooth function $\Omega: U \to (0,\infty)$.

Next we introduce a {\em conformal compactification} of $\mathbb{R}^{p,q}$,
denoted by $N^{p,q}$.
For this purpose we consider $\mathbb{R}^{p+1,q+1}$ with quadratic form
\begin{equation}  \label{Q-hat}
\hat Q(\overrightarrow{\xi}) = (\xi_0)^2+(\xi_1)^2+\dots+(\xi_p)^2
-(\xi_{p+1})^2-\dots-(\xi_{p+q})^2-(\xi_{p+q+1})^2.
\end{equation}
By definition, $N^{p,q}$ is a quadric in the real projective space:
$$
N^{p,q} = \bigl \{ [\overrightarrow{\xi}] \in \mathbb{R}P^{p+q+1} ;\:
\hat{Q}(\overrightarrow{\xi})=0 \bigr\}.
$$
The quotient map
$\mathbb{R}^{p+1,q+1} \setminus \{0\} \twoheadrightarrow \mathbb{R}P^{p+q+1}$
restricted to the product of unit spheres
$$
S^p \times S^q = \biggl\{ \overrightarrow{\xi} \in \mathbb{R}^{p+q+2} ;\:
\sum_{j=0}^p (\xi_j)^2 = \sum_{j=p+1}^{p+q+1} (\xi_j)^2 = 1 \biggr\}
\subset \mathbb{R}^{p+1,q+1}
$$
induces a smooth $2$-to-$1$ covering
$$
\pi : S^p\times S^q \to N^{p,q},
$$
which is a local diffeomorphism.
The product $S^p \times S^q$ has a semi-Riemannian metric coming from the
product of metrics --  the first factor having the standard metric from
the embedding  $S^p \subset \mathbb{R}^{p+1}$ as the unit sphere,
and the second factor $S^q$ having the negative of the standard metric.
Then the semi-Riemannian metric on $N^{p,q}$ is defined by declaring
the covering $\pi : S^p\times S^q \to N^{p,q}$ to be a local isometry.
We also see that $N^{p,q}$ is compact, since it is the image of a compact set
$S^p \times S^q$ under a continuous map $\pi$.

The space $\mathbb{R}^{p,q}$ can be embedded into $N^{p,q}$ as follows.
First, we define an embedding into the projective space
\begin{align*}
\iota : \mathbb{R}^{p,q} &\hookrightarrow \mathbb{R}P^{p+q+1} \\ 
\overrightarrow{x}=(x_1,\dots,x_{p+q}) &\mapsto
\iota(\overrightarrow{x}) =\left[ \frac{1-Q(\overrightarrow{x})}2
:x_1:\dots:x_{p+q}:\frac{1+Q(\overrightarrow{x})}2 \right],
\end{align*} 
where the image of $\overrightarrow{x}$ is given in homogeneous coordinates.
Then we observe that the image $\iota(\mathbb{R}^{p,q})$ lies inside $N^{p,q}$.

\begin{prop}
The quadric $N^{p,q}$ is indeed a conformal compactification of
$\mathbb{R}^{p,q}$.
That is, the map $\iota :\mathbb{R}^{p,q}\to N^{p,q}$ is a conformal embedding
and the closure $\overline{\iota(\mathbb{R}^{p,q})}$ is all of $N^{p,q}$.
\end{prop}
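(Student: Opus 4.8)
The plan is to prove the two assertions separately. For the embedding part I would first lift $\iota$ to the null cone: set $\tilde\iota(\overrightarrow{x}) = \bigl(\tfrac{1-Q(\overrightarrow x)}2, x_1, \dots, x_{p+q}, \tfrac{1+Q(\overrightarrow x)}2\bigr) \in \mathbb{R}^{p+1,q+1}$, so that $\iota(\overrightarrow x) = [\tilde\iota(\overrightarrow x)]$. A one-line computation gives $\hat Q(\tilde\iota(\overrightarrow x)) = \tfrac14(1-Q)^2 - \tfrac14(1+Q)^2 + Q = 0$, confirming that $\tilde\iota$ takes values on the null cone and hence $\iota(\mathbb{R}^{p,q}) \subset N^{p,q}$. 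Injectivity of $\iota$ is immediate: if two homogeneous representatives $\tilde\iota(\overrightarrow x)$ and $\tilde\iota(\overrightarrow y)$ are proportional, then adding the first and last coordinates (each such sum equals $1$) forces the proportionality constant to be $1$, whence $\overrightarrow x = \overrightarrow y$. I would also record the explicit inverse $[\xi] \mapsto (\xi_0 + \xi_{p+q+1})^{-1}(\xi_1,\dots,\xi_{p+q})$, valid wherever $\xi_0 + \xi_{p+q+1} \ne 0$, which identifies $\iota(\mathbb{R}^{p,q})$ with the open chart $\{\xi_0 + \xi_{p+q+1} \ne 0\}$ of $N^{p,q}$ and shows $\iota$ is a diffeomorphism onto its image.

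The conformality is the heart of the matter. Pulling $\hat B$ back through $\tilde\iota$, the contributions of the first and last coordinates are $\tfrac14\,dQ\otimes dQ$ and $-\tfrac14\,dQ\otimes dQ$ and cancel, leaving exactly $\tilde\iota^*\hat B = B$. The subtlety is that the metric $g_{N^{p,q}}$ is defined not through an arbitrary lift but through the local isometry $\pi\colon S^p\times S^q \to N^{p,q}$, so I must pass to a lift landing in $S^p\times S^q$. I would use the normalized lift $\sigma = \lambda\,\tilde\iota$, where $\lambda(\overrightarrow x) = \bigl[(\tfrac{1-Q}2)^2 + x_1^2 + \cdots + x_p^2\bigr]^{-1/2}$ rescales $\tilde\iota$ onto the product of unit spheres; this denominator never vanishes (its vanishing would force $Q=1$ with $x_1=\cdots=x_p=0$, impossible since then $Q\le 0$), so $\lambda$ is smooth and positive. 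Since $\pi\circ\sigma = \iota$, the map $\pi$ is a local isometry, and the product metric on $S^p\times S^q$ is the restriction of $\hat B$, we get $\iota^* g_{N^{p,q}} = \sigma^*\hat B$. Expanding $d\sigma = \lambda\,d\tilde\iota + d\lambda\otimes\tilde\iota$ and using that $\tilde\iota$ lies on the null cone — so $\hat B(\tilde\iota,\tilde\iota) = \hat Q(\tilde\iota) = 0$ and $\hat B(d\tilde\iota(v),\tilde\iota) = \tfrac12\,d(\hat Q\circ\tilde\iota)(v) = 0$ — all cross terms drop out and $\sigma^*\hat B = \lambda^2\,\tilde\iota^*\hat B = \lambda^2 B$. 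Thus $\iota^* g_{N^{p,q}} = \Omega^2 B$ with $\Omega = \lambda > 0$, as required, and nondegeneracy of $B$ forces $d\iota$ to be injective, so $\iota$ is a conformal embedding.

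For the density statement I would note that, by the explicit inverse above, the complement of $\iota(\mathbb{R}^{p,q})$ in $N^{p,q}$ is the hyperplane section $N^{p,q}\cap\{\xi_0 + \xi_{p+q+1} = 0\}$, so it suffices to show this closed set has empty interior. If it had nonempty interior, its $\pi$-preimage would be a nonempty open subset of $S^p\times S^q$ on which the restriction of the linear function $\xi_0 + \xi_{p+q+1}$ vanishes identically; but this restriction is nonconstant and real-analytic on each connected component of $S^p\times S^q$, and a nonconstant real-analytic function cannot vanish on a nonempty open set. Hence the section is nowhere dense, $\iota(\mathbb{R}^{p,q})$ is open and dense, and since $N^{p,q}$ is compact its closure is all of $N^{p,q}$.

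I expect the main obstacle to be the conformality argument, specifically the bookkeeping that reconciles the two lifts: the naive pullback $\tilde\iota^*\hat B$ already equals $B$ on the nose, yet the metric on $N^{p,q}$ is prescribed via $S^p\times S^q$, so one must verify that passing to the normalized lift $\sigma$ alters the pullback only by a positive conformal factor. The clean resolution is the null-cone identity $\hat B(d\tilde\iota(\,\cdot\,),\tilde\iota) = 0$, which annihilates every cross term; once this is in hand the remaining computations are routine.
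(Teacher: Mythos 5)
Your proof is correct, and there is nothing in the paper to compare it against: this proposition appears in the review section (Section 2), where the authors state it without proof and refer the reader to the cited expositions \cite{SCH, KO, K}. Your argument is essentially the standard one found in those references, carried out correctly: lifting $\iota$ to the null cone, checking $\hat Q \circ \tilde\iota \equiv 0$, and then handling the only genuinely delicate point --- that the metric on $N^{p,q}$ is defined through the covering $\pi : S^p \times S^q \to N^{p,q}$, so conformality must be computed through the normalized lift $\sigma = \lambda\,\tilde\iota$ rather than through $\tilde\iota$ itself. Your observation that the null-cone identities $\hat B(\tilde\iota,\tilde\iota)=0$ and $\hat B\bigl(d\tilde\iota(\,\cdot\,),\tilde\iota\bigr)=0$ kill all cross terms, leaving $\sigma^*\hat B = \lambda^2\,\tilde\iota^*\hat B = \lambda^2 B$, is exactly the right mechanism. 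Two small points worth making explicit if you write this up: first, that $\sigma$ really lands in $S^p\times S^q$ requires both normalization conditions $\sum_{j=0}^p \sigma_j^2 = \sum_{j=p+1}^{p+q+1}\sigma_j^2 = 1$, and the second one follows from the first precisely because $\tilde\iota$ lies on the null cone (you use this implicitly); second, in the density argument the case distinction $p=0$ or $q=0$ matters because $S^p\times S^q$ is then disconnected, but your phrasing ``nonconstant and real-analytic on each connected component'' already covers it, since $p+q\ge 2$ guarantees the linear function $\xi_0+\xi_{p+q+1}$ is nonconstant on every component.
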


Note that when $p$ or $q$ is $0$, $N^{p,q}$ is just the one-point
compactification of $\mathbb{R}^{p+q}$.
When $p,q \ne 0$, ``the points at infinity'', i.e. the points in the complement
$N^{p,q} \setminus \iota(\mathbb{R}^{p,q})$, can be put in bijection with the
points in the null cone
$\{\overrightarrow{x}\in\mathbb{R}^{p,q} ;\: Q(\overrightarrow{x})=0\}$. 

The indefinite orthogonal group $O(p+1,q+1)$ acts on $\mathbb{R}^{p+1,q+1}$
by linear transformations. This action descends to the projective space
$\mathbb{R}P^{p+q+1}$ and preserves $N^{p,q}$.
Thus, for every element $M \in O(p+1,q+1)$, we obtain a transformation
on $N^{p,q}$, which will be denoted by $\psi_M$.

\begin{thm}
For each $M \in O(p+1,q+1)$, the map $\psi_M: N^{p,q} \to N^{p,q}$
is a conformal transformation and a diffeomorphism.
If $M, M' \in O(p+1,q+1)$, then $\psi_M = \psi_{M'}$ if and only if $M' = \pm M$.
\end{thm}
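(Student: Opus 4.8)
The plan is to prove the theorem in three parts, matching its three assertions: that $\psi_M$ is a diffeomorphism, that it is conformal, and that the kernel of the assignment $M \mapsto \psi_M$ consists of exactly $\{\pm I\}$. I would begin by recalling that $O(p+1,q+1)$ acts linearly on $\mathbb{R}^{p+1,q+1}$ and preserves the quadratic form $\hat{Q}$ of \eqref{Q-hat}, hence preserves the null cone $\{\hat{Q} = 0\}$. Since each $M$ is linear and invertible, it maps lines through the origin to lines through the origin, so it descends to a well-defined map on $\mathbb{R}P^{p+q+1}$ that carries the quadric $N^{p,q}$ bijectively onto itself. Smoothness of $\psi_M$ and of its inverse $\psi_{M^{-1}} = (\psi_M)^{-1}$ follows because the projective action of an invertible linear map is smooth; restricting to the smooth submanifold $N^{p,q}$ gives a diffeomorphism.

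For the conformality, the cleanest route I would take is to lift the problem to the double cover $S^p \times S^q$, on which the metric is defined and where $\pi$ is a local isometry. The key observation is that the linear action of $M$ on $\mathbb{R}^{p+1,q+1}$ need not preserve $S^p \times S^q$, but for each point of $S^p \times S^q$ there is a unique positive scalar rescaling bringing $M \cdot \overrightarrow{\xi}$ back onto $S^p \times S^q$; this rescaling is a smooth positive function and contributes precisely the conformal factor $\Omega^2$. Concretely, I would write $M \cdot \overrightarrow{\xi} = \lambda(\overrightarrow{\xi}) \cdot \eta$ with $\eta \in S^p \times S^q$, compute the differential, and check that the pullback of the metric scales by the square of the derivative of the radial factor. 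Since $M$ preserves $\hat{Q}$, the induced map on the quadric respects the conformal structure; transporting this through the local isometry $\pi$ shows that $\psi_M$ satisfies $\psi_M^* B = \Omega^2 \cdot B$ for a smooth positive $\Omega$, which is exactly the definition of a conformal transformation given in the excerpt.

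For the final assertion I would argue as follows. Clearly $\psi_{-M} = \psi_M$, since $-M$ and $M$ scale every vector by the same line, so $\psi_{\pm M}$ coincide as maps on projective space. Conversely, suppose $\psi_M = \psi_{M'}$; then $\psi_{(M')^{-1}M} = \mathrm{id}$ on $N^{p,q}$, so it suffices to show that if $\psi_M$ fixes every point of $N^{p,q}$ then $M = \pm I$. The map $\psi_M$ fixing $[\overrightarrow{\xi}]$ means $M \overrightarrow{\xi} = \mu(\overrightarrow{\xi}) \overrightarrow{\xi}$ for every null vector $\overrightarrow{\xi}$, i.e. every nonzero null vector is an eigenvector of $M$. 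The crux is then a linear-algebra fact: the null cone of $\hat{Q}$ spans $\mathbb{R}^{p+1,q+1}$ and contains enough linearly independent vectors (here the hypothesis $p+q \ge 2$, so $p+1, q+1 \ge 1$ with total dimension $\ge 4$, guarantees a non-degenerate cone with full-dimensional span), so an endomorphism having every null vector as an eigenvector must be a scalar multiple of the identity. Combined with the constraint that $M \in O(p+1,q+1)$ preserves $\hat{Q}$, the scalar must satisfy $c^2 = 1$, forcing $M = \pm I$.

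The main obstacle I anticipate is the eigenvector-spanning step in the last part: one must show that a linear map for which every null vector is an eigenvector (with possibly varying eigenvalues) is necessarily scalar. The subtlety is that the eigenvalue $\mu(\overrightarrow{\xi})$ is a priori allowed to depend on $\overrightarrow{\xi}$, so one cannot immediately conclude scalarity from a single eigenspace. The clean way around this is to pick two null vectors $\overrightarrow{\xi}, \overrightarrow{\eta}$ whose sum $\overrightarrow{\xi} + \overrightarrow{\eta}$ is also null — which happens exactly when $B(\overrightarrow{\xi},\overrightarrow{\eta}) = 0$ — and apply linearity of $M$ to force the two eigenvalues to agree; since the indefinite cone is rich enough to connect any two null directions by such chains, all eigenvalues coincide, and because null vectors span the whole space the common eigenvalue makes $M$ scalar. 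I would isolate this as a short lemma so the main computation stays transparent.
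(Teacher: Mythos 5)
The paper never proves this theorem; it is quoted as background in Section~\ref{S2} with pointers to the literature (Schottenloher, Kobayashi--{\O}rsted, Kobayashi), so your attempt can only be compared with the standard arguments there. Your first two parts do follow that standard route: the projective action of an invertible linear map is smooth with smooth inverse and preserves the quadric, and conformality is obtained by lifting to the section $S^p \times S^q$ of the null cone and rescaling. One correction in part two: the conformal factor is $\lambda^{-2}$, the square of the radial rescaling itself, not ``the square of the derivative of the radial factor''; the terms involving $d\lambda$ cancel precisely because $\hat{B}(\overrightarrow{\eta}, d\overrightarrow{\eta}) = 0$ along the null cone, and exhibiting that cancellation is the actual content of the computation.

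The genuine gap is in your third part. Your lemma proposes to equate the eigenvalues of two independent null vectors $\overrightarrow{\xi}, \overrightarrow{\eta}$ by arranging $\hat{B}(\overrightarrow{\xi},\overrightarrow{\eta})=0$, so that $\overrightarrow{\xi}+\overrightarrow{\eta}$ is again null, and then to connect arbitrary null directions by chains of such orthogonal pairs. This fails exactly when $\hat{Q}$ is Lorentzian, i.e.\ when $p=0$ or $q=0$ (signature $(1,q+1)$ or $(p+1,1)$), which includes the paper's motivating example $\mathbb{R}^{2,0} \simeq \mathbb{C}$ with group $O(3,1)$. In Lorentzian signature, two null vectors that are $\hat{B}$-orthogonal are necessarily proportional: normalizing $\overrightarrow{\xi}=(1,u)$, $\overrightarrow{\eta}=(1,v)$ with $|u|=|v|=1$, one gets $\hat{B}(\overrightarrow{\xi},\overrightarrow{\eta}) = 1 - u\cdot v$, which vanishes only for $u=v$. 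So there are no nontrivial orthogonal null pairs at all, your chains cannot even start, and the lemma as stated proves nothing in these cases.

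The statement you want is still true, but the repair must use the hypothesis you never invoke in the eigenvalue-matching step: that $M$ preserves $\hat{B}$. If $M\overrightarrow{\xi} = \mu(\overrightarrow{\xi})\overrightarrow{\xi}$ for all null $\overrightarrow{\xi}$, then for any two null vectors with $\hat{B}(\overrightarrow{\xi},\overrightarrow{\eta}) \ne 0$, applying $M$ to both arguments gives $\mu(\overrightarrow{\xi})\mu(\overrightarrow{\eta}) = 1$. In the Lorentzian case every pair of independent null vectors satisfies this, so picking three pairwise independent null vectors $\overrightarrow{\xi},\overrightarrow{\eta},\overrightarrow{\zeta}$ yields $\mu(\overrightarrow{\eta}) = \mu(\overrightarrow{\zeta})$, hence $\mu(\overrightarrow{\eta})^2 = 1$ and all eigenvalues equal a common $\pm 1$; since null vectors span, $M = \pm I$. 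In the mixed-signature cases ($p,q \ge 1$) one combines this multiplicative relation with your additive trick: for any independent null pair, either $\hat{B}(\overrightarrow{\xi},\overrightarrow{\eta}) = 0$ and linearity applied to the null vector $\overrightarrow{\xi}+\overrightarrow{\eta}$ forces $\mu(\overrightarrow{\xi})=\mu(\overrightarrow{\eta})$, or $\hat{B}(\overrightarrow{\xi},\overrightarrow{\eta}) \ne 0$ and the product relation applies; together these pin all eigenvalues to a single $\pm 1$. I recommend restructuring your final lemma along these lines rather than via chains of orthogonal null vectors.
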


The following proposition essentially asserts that the restrictions of $\psi_M$
to $\mathbb{R}^{p,q}$ are compositions of parallel translations, rotations,
dilations and inversions.

\begin{prop}\label{types}
For each $M \in O(p+1,q+1)$, the conformal transformation $\psi_M$ can be
written as a composition of $\psi_{M_j}$, for some $M_j \in O(p+1,q+1)$,
$j=1,\dots,n$, such that each $\iota^{-1} \circ \psi_{M_j}$ is one of the
following types of transformations:
\begin{enumerate}
\item parallel translations
$\overrightarrow{x} \mapsto \overrightarrow{x}+\overrightarrow{b}$, where
$\overrightarrow{b} \in \mathbb{R}^{p,q}$;
\item linear transformations $\overrightarrow{x}\mapsto M\overrightarrow{x}$,
where $M \in O(p,q)$;
\item dilations $\overrightarrow{x} \mapsto \lambda \overrightarrow{x}$,
where $\lambda>0$;
\item an inversion
$\inv(\overrightarrow{x})=\frac{\overrightarrow{x}}{Q(\overrightarrow{x})}$.
\end{enumerate}
\end{prop}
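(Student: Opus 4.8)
The plan is to realize each of the four types as an explicit element of $O(p+1,q+1)$ and then to prove that these elements generate the whole conformal group $\{\psi_M\}$. It is convenient to pass to the light-cone coordinates $u=\xi_0+\xi_{p+q+1}$ and $v=\xi_{p+q+1}-\xi_0$ on $\mathbb{R}^{p+1,q+1}$, in which $\hat Q(\overrightarrow{\xi})=Q(\xi_1,\dots,\xi_{p+q})-uv$ and the embedding becomes $\iota(\overrightarrow{x})=[\,u=1,\ (\xi_1,\dots,\xi_{p+q})=\overrightarrow{x},\ v=Q(\overrightarrow{x})\,]$. Writing points of $N^{p,q}$ as $[u:\overrightarrow{\xi}:v]$, a direct computation shows that a translation by $\overrightarrow{b}$ is induced by $(u,\overrightarrow{\xi},v)\mapsto(u,\overrightarrow{\xi}+u\overrightarrow{b},v+2B(\overrightarrow{\xi},\overrightarrow{b})+Q(\overrightarrow{b})u)$, a linear map $A\in O(p,q)$ by $(u,\overrightarrow{\xi},v)\mapsto(u,A\overrightarrow{\xi},v)$, a dilation by $\lambda>0$ by $(u,\overrightarrow{\xi},v)\mapsto(\lambda^{-1}u,\overrightarrow{\xi},\lambda v)$, and the inversion by the swap $(u,\overrightarrow{\xi},v)\mapsto(v,\overrightarrow{\xi},u)$; each manifestly preserves $\hat Q$ and so lies in $O(p+1,q+1)$. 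Denote by $H$ the group of conformal transformations generated by these four families, by $[0]=\iota(\overrightarrow{0})$ (with $u=1,\ v=0$) the origin and by $[\infty]$ the point with $u=0,\ v=1$. Note that types 1--3 fix $[\infty]$, types 2 and 3 also fix $[0]$, and the inversion swaps $[0]\leftrightarrow[\infty]$. It therefore suffices to prove that $H$ contains every $\psi_M$.

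Second, I would show that $H$ acts transitively on $N^{p,q}$, and in fact that every point can be moved to $[0]$ by an element of $H$. A finite point $\iota(\overrightarrow{x})$ is sent to $[0]$ by the translation by $-\overrightarrow{x}$. A point at infinity has $u=0$ and $Q(\overrightarrow{\xi})=0$; if its $v$-coordinate is nonzero then the inversion turns it into a finite point, and if $v=0$ (so $\overrightarrow{\xi}\neq0$) then a translation changes $v$ into $2B(\overrightarrow{\xi},\overrightarrow{b})$, which is nonzero for a suitable $\overrightarrow{b}$ because $B$ is nondegenerate, reducing to the previous case. Thus translations and the inversion alone already move any point to $[0]$, and hence (composing with the inversion) to $[\infty]$ as well.

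Third, I would prove that the stabilizer $P$ of $[\infty]$ is generated by types 1--3. The key observation is that a point $y$ of $N^{p,q}$ is finite exactly when a representative $\eta$ of $y$ satisfies $\hat B(\eta,e_\infty)\neq0$, where $e_\infty$ represents $[\infty]$; since any $g$ fixing $[\infty]$ preserves $\hat B$, it preserves the set of finite points, so it sends $[0]$ to some finite $\iota(\overrightarrow{a})$. Composing with the translation by $-\overrightarrow{a}$ yields an element fixing both $[0]$ and $[\infty]$, hence fixing each of the two null lines spanned by $e_0$ and $e_\infty$. Such an element preserves the hyperbolic plane $\langle e_0,e_\infty\rangle$ and its orthogonal complement, which is precisely the middle subspace $\mathbb{R}^{p,q}$; it therefore acts on the plane by $(u,v)\mapsto(s^{-1}u,sv)$ for some $s\neq0$ and on the middle by an element of $O(p,q)$. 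For $s>0$ this is a dilation composed with a type 2 map, and the sign $s<0$ together with the factor $-1$ on the plane is absorbed into $O(p,q)$ using that $(u,\overrightarrow{\xi},v)\mapsto(-u,\overrightarrow{\xi},-v)$ equals $\psi_{-I}$, the type 2 map $\overrightarrow{x}\mapsto-\overrightarrow{x}$. Hence every element of $P$ is a product of maps of types 1--3, so $P\subseteq H$.

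Finally, for an arbitrary $M$, transitivity provides $h\in H$ with $h\circ\psi_M$ fixing $[\infty]$; then $h\circ\psi_M\in P\subseteq H$ by the previous step, so $\psi_M\in H$, as desired. The main obstacle is this generation statement rather than the bookkeeping of the four matrices: one must simultaneously establish transitivity of $H$ and the Levi-type decomposition $P=\langle\text{types }1\text{--}3\rangle$, and take care of the two places where the indefinite and disconnected nature of the groups intervenes, namely the points at infinity when $p,q\neq0$ (handled by the translation trick on the $v$-coordinate) and the sign $s<0$ together with the $\pm M$ ambiguity (absorbed into the full orthogonal group $O(p,q)$ allowed in type 2).
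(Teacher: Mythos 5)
Your proposal is correct, but there is nothing in the paper to compare it against: Proposition \ref{types} is stated in Section \ref{S2} as review material, with the reader referred to \cite{SCH,KO,K} for proofs. Your argument therefore supplies a self-contained proof where the paper gives none, and it follows the standard route one finds in those references. Concretely, you realize the four types as explicit elements of $O(p+1,q+1)$ in light-cone coordinates $u=\xi_0+\xi_{p+q+1}$, $v=\xi_{p+q+1}-\xi_0$ (your swap $(u,\overrightarrow{\xi},v)\mapsto(v,\overrightarrow{\xi},u)$ is precisely the paper's matrix $\omega_0$ of (\ref{omega})); you show the subgroup $H$ generated by them acts transitively on $N^{p,q}$, where the only delicate case -- a point at infinity with $u=v=0$, which exists exactly when $p,q\ne 0$ -- is handled by first translating to make $v=2B(\overrightarrow{\xi},\overrightarrow{b})\ne 0$ and then inverting; and you show the stabilizer of $[\infty]$ is generated by types 1--3, using that finiteness of a point is the condition $\hat B(\eta,e_\infty)\ne 0$, which is preserved by any $\psi_g$ fixing $[\infty]$. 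I checked the two places where you assert rather than compute, and both are sound: an element fixing the null lines $\mathbb{R}e_0$ and $\mathbb{R}e_\infty$ must act on the plane by $(u,v)\mapsto(s^{-1}u,sv)$ because $\hat B(ge_0,ge_\infty)=\hat B(e_0,e_\infty)\ne 0$ forces the two eigenvalues to be reciprocal; and the case $s<0$ is legitimately absorbed into type 2 because the matrices $\operatorname{diag}(-1,I,-1)$ and $\operatorname{diag}(1,-I,1)$ differ by $-\operatorname{Id}$ and hence induce the same transformation of $N^{p,q}$, the latter being the type 2 map $\overrightarrow{x}\mapsto-\overrightarrow{x}$. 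The trade-off is the expected one: the paper keeps Section \ref{S2} short by outsourcing this fact, while your proof makes the decomposition effective -- given $M$, it produces an actual factorization (move $\psi_M([\infty])$ back to $[\infty]$ by a translation--inversion--translation, then split the stabilizer element into translation, dilation and $O(p,q)$ parts), which is more information than the bare statement.
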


For future reference we observe that a diagonal matrix
\begin{equation}  \label{omega}
\omega_0 =
\begin{pmatrix}-1 & & & \\ & 1 & & \\ & & \ddots & \\ & & & 1 \end{pmatrix}
\in O(p+1,q+1)
\end{equation}
and its negative are the two matrices in $O(p+1,q+1)$ such that
$$
\iota^{-1} \circ \psi_{\omega_0} \circ \iota
= \iota^{-1} \circ \psi_{-\omega_0} \circ \iota = \inv:
\overrightarrow{x} \mapsto \frac{\overrightarrow{x}}{Q(\overrightarrow{x})}.
$$
While $\inv$ is defined only for
$\{ \overrightarrow{x} \in \mathbb{R}^{p,q} ;\: Q(\overrightarrow{x}) \ne 0 \}$,
the conformal transformation $\psi_{\omega_0} = \psi_{-\omega_0}$ is
well-defined on the entire $N^{p,q}$.

When $p+q>2$, these are {\em all} possible conformal transformations of
$N^{p,q}$. In fact, an even stronger result is true.

\begin{thm}
Let $p+q>2$. Every conformal transformation $\phi :U \to \mathbb{R}^{p,q}$,
where $U \subset \mathbb{R}^{p,q}$ is any non-empty connected open subset,
can be uniquely extended to $N^{p,q}$, i.e. there exists a unique conformal
diffeomorphism $\hat{\phi} : N^{p,q}\to N^{p,q}$ such that
$\hat{\phi}(\iota(\overrightarrow{x}))=\iota(\phi(\overrightarrow{x}))$
for all $\overrightarrow{x} \in U$.

Moreover, every conformal diffeomorphism $N^{p,q}\to N^{p,q}$
must be of the form $\psi_M$, for some $M \in O(p+1,q+1)$.
Thus, the group of all conformal transformations $N^{p,q}\to N^{p,q}$
is isomorphic to $O(p+1,q+1)/\{\pm Id\}$.
\end{thm}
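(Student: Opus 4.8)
The plan is to reduce the entire statement to the classical Liouville rigidity theorem for conformal maps in dimension $p+q>2$, and then bootstrap from local to global using Proposition \ref{types} together with a real-analytic continuation argument. First I would isolate the following local rigidity statement: if $\phi:U\to\mathbb{R}^{p,q}$ is conformal with $U$ connected and $p+q>2$, then $\phi$ is the restriction to $U$ of a finite composition of the four elementary transformations of Proposition \ref{types}. Granting this, each elementary piece equals $\iota^{-1}\circ\psi_{M_j}\circ\iota$ for a suitable $M_j\in O(p+1,q+1)$, so the composite equals $\iota^{-1}\circ\psi_M\circ\iota$ with $M=M_n\cdots M_1$. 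Setting $\hat\phi=\psi_M$ then produces a conformal diffeomorphism of $N^{p,q}$ agreeing with $\iota\circ\phi\circ\iota^{-1}$ on the open set $\iota(U)$, which is the desired extension.

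For uniqueness I would use that the metric on $N^{p,q}$ is real-analytic, being inherited from the real-analytic local isometry $\pi:S^p\times S^q\to N^{p,q}$; since by local rigidity every conformal diffeomorphism agrees near each point with some $\psi_{M'}$, it is real-analytic, and two real-analytic maps agreeing on the nonempty open set $\iota(U)$ must agree on the connected manifold $N^{p,q}$. This gives uniqueness of $\hat\phi$ and shows the extension is independent of the chosen decomposition into elementary pieces. It also records that $M\mapsto\psi_M$ is a homomorphism, since $\psi_{MM'}=\psi_M\circ\psi_{M'}$ is immediate from the linearity of the $O(p+1,q+1)$-action.

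The ``moreover'' part runs the same mechanism in reverse. Given a conformal diffeomorphism $F:N^{p,q}\to N^{p,q}$, I would pick a small connected open $V\subset\iota(\mathbb{R}^{p,q})$ with $F(V)\subset\iota(\mathbb{R}^{p,q})$; this is possible because $\iota(\mathbb{R}^{p,q})$ is open and dense and $F$ is continuous. Then $\iota^{-1}\circ F\circ\iota$ restricted to $\iota^{-1}(V)$ is a local conformal transformation of $\mathbb{R}^{p,q}$, so by the first part it extends to some $\psi_M$, and by the real-analytic uniqueness above $F=\psi_M$ on all of $N^{p,q}$. Thus $M\mapsto\psi_M$ is a surjective homomorphism $O(p+1,q+1)\to\mathrm{Conf}(N^{p,q})$ whose kernel is $\{\pm Id\}$ by the earlier theorem characterizing when $\psi_M=\psi_{M'}$, giving $\mathrm{Conf}(N^{p,q})\cong O(p+1,q+1)/\{\pm Id\}$.

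The main obstacle is the local Liouville rigidity, and it is here that $p+q>2$ is indispensable. I would differentiate the conformal identity $B(\partial_i\phi,\partial_j\phi)=e^{2\lambda}g_{ij}$, where $\lambda=\log\Omega$ and $g_{ij}=B(e_i,e_j)$. Cyclically permuting $(i,j,k)$ and using symmetry of mixed partials produces the closed second-order system $\partial_i\partial_j\phi=\lambda_i\,\partial_j\phi+\lambda_j\,\partial_i\phi-g_{ij}\,g^{kl}\lambda_k\,\partial_l\phi$, expressing the Hessian of $\phi$ through $\nabla\lambda$ and the pointwise invertible frame $\{\partial_l\phi\}$. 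The crux is the integrability condition: forming the third derivative in two ways, substituting the system back in, and using the linear independence of $\{\partial_l\phi\}$ yields an identity among the $\partial_l\phi$; contracting it and dividing by the factor $n-2$ with $n=p+q\ge 3$ forces $\lambda_{jk}-\lambda_j\lambda_k=\sigma\,g_{jk}$ for a scalar $\sigma$. Setting $u=e^{-\lambda}$ converts this to $\mathrm{Hess}(u)=\tau\,g$; since the third derivatives of $u$ must be totally symmetric while $\tau\,g$ is diagonal, $\tau$ is forced to be constant, so $u$ is a polynomial of degree at most two whose quadratic part is proportional to $Q$. This is exactly the shape of the conformal factors of the generators in Proposition \ref{types}, and a short case analysis on $u$ (constant, affine, or genuinely quadratic, the last reduced to the others by conjugating with $\inv$) recovers $\phi$ as a composition of translations, elements of $O(p,q)$, dilations and the inversion. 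In dimension two the factor $n-2$ vanishes and the constraint is vacuous, which is precisely why holomorphic maps escape the conclusion.
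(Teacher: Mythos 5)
The paper never proves this theorem: it sits in Section~\ref{S2}, which the authors explicitly frame as a review of known facts about $N^{p,q}$, with proofs delegated to the references \cite{SCH,KO,K} (Schottenloher's book contains precisely this statement for $\mathbb{R}^{p,q}$ with $p+q>2$). So there is no internal proof to compare against, and your proposal must be judged as a self-contained proof of the cited result. As such it is essentially correct, and it follows the same route as the cited sources: the classical Liouville rigidity argument in pseudo-Euclidean signature (the Hessian system $\partial_i\partial_j\phi=\lambda_i\partial_j\phi+\lambda_j\partial_i\phi-g_{ij}g^{kl}\lambda_k\partial_l\phi$, the integrability condition with its factor of $n-2$, the reduction to $\operatorname{Hess}(u)=\tau g$ with $\tau$ constant for $u=e^{-\lambda}$, and the case analysis recovering the generators of Proposition~\ref{types}), followed by globalization to $N^{p,q}$ by real-analytic continuation. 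You also correctly identify exactly where $p+q>2$ enters and why dimension two escapes the conclusion.

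Two steps need more care than you give them. First, in the extension step, local rigidity only yields, near each point of $U$, agreement with some $\psi_{M'}$; to obtain a single $M$ valid on all of the connected set $U$, note that on overlaps two such local matrices satisfy $\psi_{M'}=\psi_{M''}$ on a nonempty open set, hence $M''=\pm M'$ by analyticity together with the earlier theorem characterizing when $\psi_M=\psi_{M'}$, so the local choice of matrix is locally constant up to sign and therefore constant on $U$. Second, and more substantively, in the ``moreover'' part you assert that an arbitrary conformal diffeomorphism $F$ of $N^{p,q}$ is real-analytic because it locally agrees with maps $\psi_{M'}$; this follows from your local rigidity statement only near points $\eta$ such that both $\eta$ and $F(\eta)$ lie in the chart $\iota(\mathbb{R}^{p,q})$. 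At the points at infinity you must first conjugate: since $O(p+1,q+1)$ acts transitively on $N^{p,q}$, the translates $\psi_M\bigl(\iota(\mathbb{R}^{p,q})\bigr)$ cover $N^{p,q}$, so one can choose $M_0,M_1$ with $\psi_{M_0}(\eta)$ and $\psi_{M_1}(F(\eta))$ in the chart and apply local rigidity to $\psi_{M_1}\circ F\circ\psi_{M_0}^{-1}$. With that insertion the analytic-continuation argument closes, and the kernel computation via $\psi_M=\psi_{M'}\iff M'=\pm M$ gives the stated isomorphism with $O(p+1,q+1)/\{\pm Id\}$.
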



\subsection{Example: The $\mathbb{R}^{2,0} \simeq \mathbb{C}$ Case}  \label{C}

The Euclidean plane $\mathbb{R}^{2,0}$ can be identified with the complex
plane $\mathbb{C}$.
In this case, the conformal compactification $N^{2,0}$ is just the
one-point compactification $\hat{\mathbb{C}} = \mathbb{C} \cup \{\infty\}$.
The orientation-preserving conformal transformations $\phi : U \to \mathbb{C}$,
where $U \subset \mathbb{C}$ is an open subset, are exactly those
holomorphic functions with nowhere-vanishing derivative.
Thus, it is no longer true that every conformal transformation on $\mathbb{C}$
is obtained by restricting some $\psi_M$, $M \in O(3,1)$.

\begin{prop}
Every conformal diffeomorphism $N^{2,0} \to N^{2,0}$ must be
of the form $\psi_M$, for some $M \in O(3,1)$.
\end{prop}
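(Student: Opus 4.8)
The plan is to reduce the statement to the classical description of the conformal automorphism group of the Riemann sphere and then to match that group with $\{\psi_M : M\in O(3,1)\}$ using Proposition \ref{types}. Note first that the earlier theorem identifying all conformal diffeomorphisms of $N^{p,q}$ with the $\psi_M$ requires $p+q>2$, so it does not apply here and a separate argument is genuinely needed. Since $N^{2,0}=\hat{\mathbb{C}}=\mathbb{C}\cup\{\infty\}$ carries its standard conformal (complex) structure, a diffeomorphism $\phi:N^{2,0}\to N^{2,0}$ satisfying $\phi^*B=\Omega^2 B$ with $\Omega>0$ is angle-preserving; being a diffeomorphism, it is either everywhere orientation-preserving or everywhere orientation-reversing. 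In the first case $\phi$ is holomorphic, hence a biholomorphic automorphism of $\hat{\mathbb{C}}$; in the second case $\phi$ composed with complex conjugation $\kappa:z\mapsto\bar z$ is holomorphic.

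First I would invoke the standard fact that a biholomorphic automorphism $f$ of $\hat{\mathbb{C}}$ is a M\"obius transformation: $f$ is a meromorphic function on a compact Riemann surface, hence rational, and bijectivity forces its degree to be $1$, so $f(z)=\frac{az+b}{cz+d}$ with $ad-bc\ne 0$. Combined with the orientation dichotomy above, every conformal diffeomorphism of $N^{2,0}$ is therefore either a M\"obius transformation $z\mapsto\frac{az+b}{cz+d}$ or an anti-M\"obius transformation $z\mapsto\frac{a\bar z+b}{c\bar z+d}$ (the latter being $\kappa$ followed by a M\"obius map).

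It then remains to realize all of these as $\psi_M$. By Proposition \ref{types}, on $\mathbb{R}^{2,0}\simeq\mathbb{C}$ the induced maps $\iota^{-1}\circ\psi_M\circ\iota$ are generated by translations $z\mapsto z+b$, the $O(2)$-linear maps $z\mapsto e^{i\theta}z$ and $z\mapsto e^{i\theta}\bar z$, dilations $z\mapsto\lambda z$ (with $\lambda>0$), and the inversion $\inv(z)=z/Q(z)=1/\bar z$; since $M\mapsto\psi_M$ is a homomorphism, the transformations of $\hat{\mathbb{C}}$ of the form $\psi_M$ are exactly the group these generate. Composing a rotation with a dilation gives every map $z\mapsto az$ with $a\in\mathbb{C}^*$, while $\inv\circ\kappa:z\mapsto 1/z$ gives the standard inversion; together with the translations these are the familiar generators of $PSL(2,\mathbb{C})$, so every M\"obius transformation is an orientation-preserving $\psi_M$. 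Composing further with the orientation-reversing $\psi_M$ induced by $\kappa$ yields every anti-M\"obius transformation, completing the argument that each conformal diffeomorphism of $N^{2,0}$ has the form $\psi_M$.

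I expect the main obstacle to be the orientation bookkeeping in this last step rather than the complex-analytic classification, which is standard. In particular one must notice that the inversion supplied by $\inv$ is $z\mapsto 1/\bar z$, an orientation-reversing map, rather than the orientation-preserving M\"obius generator $z\mapsto 1/z$; producing the latter genuinely requires combining $\inv$ with a conjugation $\kappa$. Once the generators of the M\"obius group and their orientation-reversing counterparts are exhibited among the $\psi_M$, the automorphism theorem for $\hat{\mathbb{C}}$ closes the proof.
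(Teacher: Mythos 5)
Your proposal is correct, but there is nothing in the paper to compare it against: this proposition sits in the expository Subsection \ref{C}, where the paper states it without proof and defers to the literature (\cite{SCH, K}) and to standard complex analysis \cite{A}. Your argument supplies the standard proof: the orientation dichotomy on the connected surface $N^{2,0} \simeq S^2$, the classification of holomorphic automorphisms of $\hat{\mathbb{C}}$ as M\"obius transformations (rational of degree one), and then the realization of all M\"obius and anti-M\"obius maps among the $\psi_M$. Your orientation bookkeeping is exactly right and is the one point worth being careful about: since $Q(z)=|z|^2$, the inversion of Proposition \ref{types} is $\inv(z)=1/\bar z$, which is orientation-reversing, so the holomorphic generator $z \mapsto 1/z$ must be produced as $\inv \circ \kappa$ with $\kappa(z)=\bar z \in O(2)$; this matches the paper's own split of the conformal group into the $SO(3,1)$-part (M\"obius) and the orientation-reversing part. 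One small caveat you should make explicit: Proposition \ref{types} as literally stated only decomposes a given $\psi_M$ into maps of the four listed types; your argument also uses the converse, namely that every translation, every $O(2)$-linear map, every dilation, and the inversion is itself of the form $\iota^{-1}\circ\psi_{M}\circ\iota$ for some $M \in O(3,1)$. The paper records this explicitly only for the inversion (the matrix $\omega_0$ of (\ref{omega})), but the converse is standard, and the paper itself relies on it in Section \ref{S3}, so this is a matter of stating the assumption rather than a gap in the mathematics.
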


Recall from elementary complex analysis (see, for example, \cite{A}) that
we have an action of $SL(2,\mathbb{R})$ on $\hat{\mathbb{C}} \simeq N^{2,0}$
by M\"obius (or fractional linear) transformations:
$$
z \mapsto \frac{az+b}{cz+d}, \qquad
\begin{pmatrix}a & b \\ c & d\end{pmatrix} \in SL(2,\mathbb{C}).
$$
Each of these transformations is conformal and orientation-preserving.
These are precisely the transformations produced by $SO(3,1) \subset O(3,1)$.
The other transformations produced by $O(3,1)$ are orientation-reversing
and can be described as
$$
z \mapsto \frac{a \bar z +b}{c \bar z +d}, \qquad
\begin{pmatrix}a & b \\ c & d\end{pmatrix} \in SL(2,\mathbb{C}),
$$
where $\bar z$ denotes the complex conjugate of $z$.
These are all the conformal diffeomorphisms
$\hat{\mathbb{C}} \to \hat{\mathbb{C}}$.

We are particularly interested in the following properties of the M\"obius
transformations:

\begin{prop}  \label{circles}
The M\"obius transformations on $\hat{\mathbb{C}}$ map circles and lines
into circles and lines.
Moreover, this action of $SL(2,\mathbb{C})$ on circles and lines is transitive,
i.e. every circle or line in $\hat{\mathbb{C}}$ can be mapped into any
other circle or line by a M\"obius transformation.
\end{prop}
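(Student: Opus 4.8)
The plan is to prove the two assertions in turn, first the mapping property and then transitivity, starting from the classical description of the family of circles and lines in $\hat{\mathbb{C}}$ as the solution sets of real-coefficient equations. Concretely, a subset of $\hat{\mathbb{C}}$ is a circle or a line (together with $\infty$) precisely when it is the locus of
$$
A z\bar z + \bar B z + B \bar z + C = 0, \qquad
A,C \in \mathbb{R},\ B \in \mathbb{C},\ |B|^2 > AC,
$$
where $A=0$ gives lines and $A \ne 0$ gives circles, the inequality $|B|^2 > AC$ guaranteeing that the locus is a genuine nonempty circle or line. I would establish this characterization first, since both halves of the proposition reduce to manipulations of this single equation.

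For the mapping property I would decompose an arbitrary M\"obius transformation into elementary pieces of three types: translations $z \mapsto z+b$, dilation--rotations $z \mapsto a z$ with $a \ne 0$, and the inversion $z \mapsto 1/z$. When $c=0$ the map $\frac{az+b}{cz+d}$ is a dilation--rotation followed by a translation, and when $c \ne 0$ one has the identity
$$
\frac{az+b}{cz+d} = \frac{a}{c} - \frac{ad-bc}{c^2}\cdot\frac{1}{\,z+d/c\,},
$$
exhibiting it as a translation, the inversion, a dilation--rotation, and a final translation. It then suffices to verify that each elementary type carries the displayed equation into another equation of the same form. The translation and dilation--rotation cases are immediate substitutions; the only real computation is the inversion, where setting $z = 1/w$ and multiplying through by $w\bar w$ turns the equation into $C w\bar w + B w + \bar B \bar w + A = 0$, again of the required form with the roles of $A$ and $C$ interchanged (and $|B|^2 > AC$ preserved).

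For transitivity I would invoke the fact that any three distinct points of $\hat{\mathbb{C}}$ determine a unique circle or line. Given an arbitrary circle or line, I would pick three distinct points on it and apply the cross-ratio map sending them to $0,1,\infty$; this map is a M\"obius transformation, and since $0,1,\infty$ all lie on the extended real axis $\mathbb{R}\cup\{\infty\}$, the image of our circle or line is, by the mapping property just proved, a circle or line passing through these three points, hence equal to $\mathbb{R}\cup\{\infty\}$. Thus every circle or line can be carried onto the fixed line $\mathbb{R}\cup\{\infty\}$, and composing one such map with the inverse of another yields a M\"obius transformation between any two prescribed circles or lines.

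I expect the only genuine content to be the inversion computation in the first part together with the bookkeeping around the point at infinity: one must treat lines-through-$\infty$ and circles on equal footing throughout, and in the transitivity step one must ensure that the constructed map sends the \emph{entire} circle or line (not merely the three sample points) to the target. This last point is exactly what the combination of the mapping property with the uniqueness of the circle or line through three points provides, so no further estimate is needed.
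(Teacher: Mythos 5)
Your proof is correct, but note that the paper never actually proves Proposition \ref{circles}: it is quoted as a classical fact from elementary complex analysis with a reference to Ahlfors. The meaningful comparison is with the paper's own generalization, Theorem \ref{main-thm}, which contains Proposition \ref{circles} as the special case $p=2$, $q=0$, and whose mechanism is entirely different from yours. There, circles and lines in $\hat{\mathbb{C}} \simeq N^{2,0}$ are realized as intersections of the projective quadric $N^{2,0} \subset \mathbb{R}P^{3}$ with projective hyperplanes (Definition \ref{def}), so the mapping property becomes the near-trivial statement that the linear action of $O(3,1)$ on $\mathbb{R}P^{3}$ carries hyperplane sections of $N^{2,0}$ to hyperplane sections, and transitivity follows from Witt's Extension Theorem (Lemma \ref{Witt}) applied to the lines spanned by the normal vectors of the two hyperplanes. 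Your route --- the real-coefficient equation $A z\bar z + \bar B z + B\bar z + C = 0$ with $|B|^2 > AC$, the decomposition of a M\"obius map into translations, dilation--rotations and the inversion, and the cross-ratio/three-point argument for transitivity --- is the standard elementary one, and its steps check out: the identity $\frac{az+b}{cz+d} = \frac{a}{c} - \frac{ad-bc}{c^2}\cdot\frac{1}{z+d/c}$ is correct, the inversion exchanges $A$ and $C$ while preserving $|B|^2 - AC$, and the uniqueness of the circle or line through three distinct points legitimately upgrades the normalization of three sample points to an identification of the full curves. What your approach buys is self-containedness and explicit formulas; what the paper's approach buys is uniform generality in $(p,q)$ --- the naive equation calculus breaks down when the quadratic form is indefinite, because a hypersurface can lie inside the null cone where the inversion is undefined (Example \ref{R^{1,1}-ex}), which is exactly why the paper passes to $N^{p,q}$ --- together with a transitivity argument via Witt's theorem that requires no analogue of the three-point lemma, an analogue which indeed is unavailable for general signature.
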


\subsection{Example: The $\mathbb{R}^{1,1}$ Case}  \label{R^{1,1}}

It is possible to describe all conformal transformations of $\mathbb{R}^{1,1}$
as well as all conformal diffeomorphisms $N^{1,1} \to N^{1,1}$
(see, for example, \cite{SCH}).
Not every conformal transformation of $\mathbb{R}^{1,1}$ can be extended to
$N^{1,1}$; and not every conformal diffeomorphism $N^{1,1} \to N^{1,1}$ is of
the form $\psi_M$, for some $M \in O(2,2)$.

A result stated in \cite{YAG} after Equation (49') in Supplement C can be
regarded as an analogue of Proposition \ref{circles} and can be loosely
paraphrased as ``the elements of $O(2,2)$ map hyperbolas in $\mathbb{R}^{1,1}$
with asymptotes parallel to $x_2=\pm x_1$ and lines into hyperbolas of
the same kind and lines''.

\subsection{Example: $N^{2,2}$ Is the Grassmannian $Gr(2, \mathbb{R}^4)$}

We show that the conformal compactification $N^{2,2}$ together with the action
of $SO(3,3)$ is the same as the Grassmannian of $2$-planes in $\mathbb{R}^4$,
$Gr(2, \mathbb{R}^4)$, together with the natural action of $SL(4,\mathbb{R})$.
This example illustrates the following observation made in Subsection 2.7
of Part III of \cite{KO}: The conformal compactification $N^{p,q}$ can be
regarded as the generalized real flag variety $G/P^{\text{max}}$ with
$\mathbb{R}^{p,q}$ being the open Bruhat cell.
The space $Gr(2, \mathbb{R}^4)$ plays a central role in the split real form
of the Penrose transform \cite{M}.

We essentially follow the proof that $SL(4,\mathbb{R})$ is locally isomorphic
to $SO(3,3)$, as given in \cite{H} (see also \cite[Theorem 1.3.1]{WW}).
Pick a basis $\{\overrightarrow{e_1},\overrightarrow{e_2},\overrightarrow{e_3},
\overrightarrow{e_4}\}$ of $\mathbb{R}^4$. We define a symmetric bilinear form
$B : \bigwedge^2 \mathbb{R}^4 \times \bigwedge^2 \mathbb{R}^4 \to \mathbb{R}$
by the condition that
$$
v\wedge w = B(v,w) \cdot \overrightarrow{e_1} \wedge \overrightarrow{e_2}
\wedge \overrightarrow{e_3} \wedge \overrightarrow{e_4},
\qquad v,w \in \bigwedge^2 \mathbb{R}^4.
$$
Let $Q(v)=B(v,v)$ be the corresponding quadratic form.
Define an isomorphism $\bigwedge^2 \mathbb{R}^4 \simeq \mathbb{R}^6$
by specifying a basis
\begin{equation}  \label{basis}
f_1 =\overrightarrow{e_1} \wedge \overrightarrow{e_2}, \quad
f_2 = \overrightarrow{e_1} \wedge \overrightarrow{e_3}, \quad
f_3 = \overrightarrow{e_1} \wedge \overrightarrow{e_4}, \quad
f_4 = \overrightarrow{e_2} \wedge \overrightarrow{e_3}, \quad
f_5 = \overrightarrow{e_2} \wedge \overrightarrow{e_4}, \quad
f_6 = \overrightarrow{e_3} \wedge \overrightarrow{e_4}.
\end{equation}
Then
$$
Q(v)=2x_1x_6-2x_2x_5+2x_3x_4, \qquad v \in \bigwedge^2 \mathbb{R}^4,
$$
where the $x_j$'s are the coordinates of $v$ with respect to
the basis (\ref{basis}).
This shows that $Q$ is nondegenerate and has signature $(3,3)$.
Moreover, since the form $B$ is $SL(4,\mathbb{R})$-invariant,
the linear transformations on $\bigwedge^2 \mathbb{R}^4$ by elements of
$SL(4,\mathbb{R})$ result in transformations by elements of $O(3,3)$ on
$\mathbb{R}^6$. This establishes an isomorphism of
$SL(4,\mathbb{R}) / \{\pm Id\}$ with the connected component of
the identity element of $O(3,3)$.

Clearly, every {\em decomposable} $v \in \bigwedge^2 \mathbb{R}^4$, i.e.
every element that can be written as
$v = \overrightarrow{x} \wedge \overrightarrow{y}$
for some $\overrightarrow{x}, \overrightarrow{y} \in \mathbb{R}^4$,
satisfies $v \wedge v =0$.
Conversely, if $v \in \bigwedge^2 \mathbb{R}^4$ has the property that
$v \wedge v =0$, then it must be decomposable (c.f. \cite[Lemma 1.3.2]{WW}).
Therefore, the set of decomposable elements in $\bigwedge^2\mathbb{R}^4$ can be
described as the cone
$$
\Bigl\{ v \in \bigwedge^2\mathbb{R}^4 ;\: v \wedge v =0 \Bigr\}
= \Bigl\{ v \in \bigwedge^2\mathbb{R}^4 ;\: Q(v) =0 \Bigr\}.
$$

Finally, the Pl\"{u}cker embedding
$Gr(2,\mathbb{R}^4) \hookrightarrow \mathbb{P}(\bigwedge^2\mathbb{R}^4)$
gives an identification between the Grassmannian of $2$-planes in
$\mathbb{R}^4$ and the equivalence classes of decomposable elements in
$\bigwedge^2\mathbb{R}^4 \setminus \{0\}$.
Thus we have a natural diffeomorphism $Gr(2,\mathbb{R}^4) \simeq N^{2,2}$
respecting the group actions and commuting with the group isomorphism
$SL(4,\mathbb{R}) / \{\pm Id\} \simeq SO(3,3) / \{\pm Id\}$.

\section{Hyperboloids and Hyperplanes in $\mathbb{R}^{p,q}$}  \label{S3}

This paper is mainly concerned with geometric properties of the action of
$O(p+1,q+1)$ on $\mathbb{R}^{p,q}$ that generalize the
``mapping circles and lines onto circles and lines'' properties stated in
Proposition \ref{circles} for the case of $\mathbb{R}^{2,0} \simeq \mathbb{C}$.
Note that when switching from the positive definite case of
$\mathbb{R}^{2,0}$ (Subsection \ref{C}) to the indefinite case of
$\mathbb{R}^{1,1}$ (Subsection \ref{R^{1,1}}), circles were replaced by
hyperbolas with asymptotes parallel to $x_2=\pm x_1$.
Observe further, that in both cases, the curves in question are simply the
zero loci of the quadrics $Q(x)=c$, for $c\in\mathbb{R}$, up to translation.
In the general case, where the quadratic form $Q$ is not necessarily
positive definite, circles will be replaced with hyperboloids.
Therefore, we wish to define a suitable space of hyperboloids and hyperplanes.

As a first try, let ${\cal H}$ be the set of quadratic hypersurfaces in
$\mathbb{R}^{p,q}$ that can be put in the form
$$
\bigl\{ \overrightarrow{x} \in \mathbb{R}^{p,q} ;\:
\alpha Q(\overrightarrow{x}) + B(\beta,\overrightarrow{x})+\gamma=0 \bigr\}
$$
for some choice of parameters $\alpha,\gamma \in \mathbb{R}$ and
$\beta\in\mathbb{R}^{p,q}$ subject to the requirement that this set of
solutions is indeed a hypersurface, i.e. has codimension $1$ in
$\mathbb{R}^{p,q}$.
For example, when $\alpha=0$ we obtain affine hyperplanes
(i.e. hyperplanes not necessarily passing through the origin).
If $\alpha \ne 0$ and $p$ or $q$ is $0$ we obtain spheres.
In the most typical case $\alpha \ne 0$ and $p,q \ne 0$, these hypersurfaces
are either hyperboloids or cones.

We would like to prove a statement to the effect that the conformal action of
$O(p+1,q+1)$ restricted to $\mathbb{R}^{p,q}$ preserves the quadratic
hypersurfaces in ${\cal H}$.
The following argument can be made precise if $p$ or $q$ is $0$ and is similar
to the one given in \cite{BG}. However, it fails in general.
We start with a quadratic hypersurface $H_0 \in {\cal H}$ given by an equation
$$
\alpha_0 Q(\overrightarrow{x}) + B(\beta_0,\overrightarrow{x}) + \gamma_0 =0,
$$
where $\alpha_0,\gamma_0\in\mathbb{R}$ and $\beta_0\in\mathbb{R}^{p,q}$.
By Proposition \ref{types}, it is sufficient to prove that
parallel translations, rotations by elements of $O(p,q)$, dilations
and the inversion $\inv$ send $H_0$ into another element of ${\cal H}$.
There are no problems with translations, rotations and dilations.
Applying the map $\inv$ to $H_0$, we have:
\begin{align*}
0 &= \alpha_0 Q\left (\frac{\overrightarrow{x}}{Q(\overrightarrow{x})}\right)
+ B \left( \beta_0,\frac{\overrightarrow{x}}{Q(\overrightarrow{x})} \right)
+ \gamma_0 \\
&= \frac{Q(\overrightarrow{x})}{Q(\overrightarrow{x})^2}\alpha_0
+ \frac1{Q(\overrightarrow{x})} B(\beta_0,\overrightarrow{x})+\gamma_0 \\
&= \frac1{Q(\overrightarrow{x})} \alpha_0
+ \frac1{Q(\overrightarrow{x})}B(\beta_0,\overrightarrow{x})+\gamma_0 \\
&= \alpha_0+B(\beta_0,\overrightarrow{x})+\gamma_0 Q(\overrightarrow{x}).
\end{align*}

\begin{rem}
At this point we can see why it is necessary to consider hypersurfaces
with quadratic terms proportional to $Q(\overrightarrow{x})$ only.
Otherwise, applying the inversion map would result in an equation involving
terms of degree four.
\end{rem}

The problem with this argument is that, when $p,q \ne 0$,
the hypersurface $H_0$ may have a significant intersection with the null cone
$\{ Q(\overrightarrow{x}) =0 \}$. In fact, as the next example shows,
$H_0$ may lie entirely inside the null cone.

\begin{ex}  \label{R^{1,1}-ex}
Let $p=q=1$, and consider two lines $L_+$ and $L_-$ in $\mathbb{R}^{1,1}$
given by equations $x_1+x_2=0$ and $x_1-x_2=0$ respectively.
Both lines lie inside the null cone $\{ Q(\overrightarrow{x}) =0 \}$,
where the inversion map $\inv$ is not defined.
Recall the element $\omega_0$ defined by (\ref{omega}), which generates $\inv$.
The map $\psi_{\omega_0}: N^{1,1} \to N^{1,1}$ maps the lines $L_+$ and $L_-$ into
``infinity'', i.e. the complement of $\iota(\mathbb{R}^{1,1})$ in $N^{1,1}$.
\end{ex}

This example demonstrates that it is necessary to work in the conformal
compactification $N^{p,q}$ rather than $\mathbb{R}^{p,q}$ and that it is also
necessary to have a suitable alternative to ${\cal H}$ consisting of
quadratic hypersurfaces in $N^{p,q}$.
The most na\"ive notion of hypersurfaces in $N^{p,q}$ defined as the closures
of quadratic hypersurfaces from ${\cal H}$ does not work.

\section{Conformal Quadratic Hypersurfaces in $N^{p,q}$}  \label{S4}

In this section we define the analogues of hyperboloids and affine hyperplanes
in the conformal compactification $N^{p,q}$ on which the action of $O(p+1,q+1)$
is well-defined.

Let $\hat B$ denote the bilinear form on $\mathbb{R}^{p+1,q+1}$ such that
$\hat B(\overrightarrow{\xi},\overrightarrow{\xi})
= \hat Q(\overrightarrow{\xi})$
(recall that the quadratic form $\hat Q$ is defined by (\ref{Q-hat})).
Then every hyperplane $h$ in $\mathbb{R}^{p+1,q+1}$ or $\mathbb{R}P^{p+q+1}$
can be expressed as
$$
h = \bigl\{ \overrightarrow{\xi} \in \mathbb{R}^{p+1,q+1} ;\:
\hat B(\overrightarrow{a}, \overrightarrow{\xi}) =0 \bigr\}
$$
for some non-zero vector $\overrightarrow{a} \in \mathbb{R}^{p+1,q+1}$.
The vector $\overrightarrow{a}$ is uniquely determined by $h$ up to scaling.
The following lemma shows that every element of ${\cal H}$ can be realized
as an intersection of $N^{p,q}$ with a hyperplane in $\mathbb{R}P^{p+q+1}$.

\begin{lem}
Every quadratic hypersurface $H \in {\cal H}$ can be realized as the pullback
of the intersection of $N^{p,q}$ with a unique hyperplane in $\mathbb{R}P^{p+q+1}$.
\end{lem}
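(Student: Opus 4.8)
The plan is to set up the explicit embedding $\iota$ and then verify by direct substitution that the defining equation of $H$ pulls back to a linear condition in the homogeneous coordinates of $\mathbb{R}P^{p+q+1}$. Concretely, given $H \in {\cal H}$ defined by $\alpha Q(\overrightarrow{x}) + B(\beta,\overrightarrow{x}) + \gamma = 0$, I would recall that $\iota(\overrightarrow{x})$ has homogeneous coordinates $\bigl[\frac{1-Q(\overrightarrow{x})}{2} : x_1 : \dots : x_{p+q} : \frac{1+Q(\overrightarrow{x})}{2}\bigr]$. Writing $\xi_0 = \frac{1-Q}{2}$, $\xi_j = x_j$ for $1 \le j \le p+q$, and $\xi_{p+q+1} = \frac{1+Q}{2}$, I observe the two useful identities $\xi_0 + \xi_{p+q+1} = 1$ and $\xi_{p+q+1} - \xi_0 = Q(\overrightarrow{x})$. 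These let me express each term of the defining equation of $H$ as a linear form in the $\xi$'s: the constant $\gamma$ becomes $\gamma(\xi_0 + \xi_{p+q+1})$, the linear term $B(\beta,\overrightarrow{x})$ is already linear in $\xi_1,\dots,\xi_{p+q}$, and $\alpha Q(\overrightarrow{x})$ becomes $\alpha(\xi_{p+q+1} - \xi_0)$.

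Collecting these, the equation for $H$ becomes $(\gamma - \alpha)\xi_0 + B(\beta, \cdot)_{\text{linear}} + (\gamma + \alpha)\xi_{p+q+1} = 0$, which is a linear (homogeneous) equation in the coordinates $\overrightarrow{\xi}$ and hence defines a hyperplane in $\mathbb{R}P^{p+q+1}$. The next step is to rewrite this linear functional in the form $\hat B(\overrightarrow{a}, \overrightarrow{\xi}) = 0$ by reading off the vector $\overrightarrow{a}$; because $\hat Q$ has the diagonal signature $(+,+,\dots,-,\dots,-)$ from (\ref{Q-hat}), converting a coefficient vector into the $\hat B$-dual $\overrightarrow{a}$ just involves flipping the signs on the negative-definite coordinates, which is routine. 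I would then check that this $\overrightarrow{a}$ is nonzero, which follows from the requirement that $H$ is genuinely a hypersurface (so $\alpha, \beta, \gamma$ are not all zero in a degenerate way), and conclude that intersecting this hyperplane with $N^{p,q}$ and pulling back via $\iota$ recovers $H$.

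The uniqueness claim I would handle by a dimension/Zariski-density argument: since $\iota(\mathbb{R}^{p,q})$ is dense in $N^{p,q}$ (by the Proposition asserting $\overline{\iota(\mathbb{R}^{p,q})} = N^{p,q}$) and the hyperplane is determined by $\overrightarrow{a}$ up to scaling, any two hyperplanes cutting out the same $H$ must agree on a dense subset of $N^{p,q}$, forcing their defining vectors to be proportional. I would need to verify that $N^{p,q}$ is not contained in any hyperplane of $\mathbb{R}P^{p+q+1}$ so that a hyperplane is pinned down by its trace on the quadric; this is clear since the quadric $\hat Q = 0$ spans $\mathbb{R}P^{p+q+1}$.

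The main obstacle will be the bookkeeping around the null cone and the point at infinity: the computation above recovers $H$ only on the image $\iota(\mathbb{R}^{p,q})$, and the statement speaks of the \emph{pullback} of the intersection, so I must be careful that the correspondence $H \leftrightarrow (\text{hyperplane})$ is being asserted at the level of $\mathbb{R}^{p,q}$ via $\iota$ rather than as an equality of subsets of $N^{p,q}$. In particular, Example \ref{R^{1,1}-ex} warns that $H$ may meet the null cone $\{Q = 0\}$, where $\iota$ still makes sense but where the closure in $N^{p,q}$ picks up extra points at infinity; the honest reading of the lemma is that $\iota^{-1}$ of the hyperplane slice equals $H$, and I would phrase the verification as the set-theoretic identity $\iota^{-1}\bigl((\text{hyperplane}) \cap N^{p,q}\bigr) = H$, obtained directly from the substitution, without claiming anything about the behavior on $N^{p,q} \setminus \iota(\mathbb{R}^{p,q})$.
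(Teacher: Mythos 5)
Your existence half is correct and is essentially the paper's own argument: the paper performs the same substitution, just in the reverse direction (start from a hyperplane $\hat B(\overrightarrow{a},\overrightarrow{\xi})=0$, pull back, and read off $\alpha_0=-\tfrac12(a_0+a_{p+q+1})$, $\beta_0=(a_1,\dots,a_{p+q})$, $\gamma_0=\tfrac12(a_0-a_{p+q+1})$), and since this correspondence $\overrightarrow{a}\leftrightarrow(\alpha,\beta,\gamma)$ is an invertible linear map, your direction and the paper's are the same computation.

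The genuine gap is in your uniqueness argument. Your chain is: equal pullbacks $\Rightarrow$ the traces $h\cap N^{p,q}$ and $h'\cap N^{p,q}$ agree on a dense subset $\Rightarrow$ the traces are equal $\Rightarrow$ $h=h'$. The first implication is unjustified, because the pullback only sees $h\cap N^{p,q}\cap\iota(\mathbb{R}^{p,q})$, and a trace can have \emph{entire components at infinity} that the pullback cannot detect. Concretely, for $p=q=1$ take $h=\{\xi_1=\xi_2\}\subset\mathbb{R}P^{3}$; on $h$ the equation of $N^{1,1}$ becomes $(\xi_0)^2=(\xi_3)^2$, so $h\cap N^{1,1}$ is the union of the two projective lines $\{\xi_1=\xi_2,\ \xi_0=\xi_3\}$ and $\{\xi_1=\xi_2,\ \xi_0=-\xi_3\}$, and the second line lies entirely in $N^{1,1}\setminus\iota(\mathbb{R}^{1,1})$ (the locus $\xi_0+\xi_3=0$). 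Thus the trace is strictly larger than the closure of $\iota(H)$, where $H=\{x_1=x_2\}$: equality of pullbacks does not force the traces to agree on a set dense in the traces, so the closure argument stalls. The last implication is also not supported by what you cite: the fact that $N^{p,q}$ spans $\mathbb{R}P^{p+q+1}$ does not mean a hyperplane is pinned down by its trace --- when $q=0$ and the sign is negative the trace is empty (Lemma \ref{intersection-lem}), so distinct hyperplanes share the same trace. What you would actually need is that $\iota(H)$ itself spans $h$, and even this fails in the example above: $\iota(\{x_1=x_2\})$ spans only a projective line, which has codimension $2$ in $\mathbb{R}P^{3}$, and infinitely many hyperplanes contain it.

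So uniqueness requires a different input. The paper's formulation reduces it, via the explicit linear bijection between vectors $\overrightarrow{a}$ (up to scale) and triples $(\alpha,\beta,\gamma)$ (up to scale), to the statement that a hypersurface $H\in{\cal H}$ determines its defining coefficients $(\alpha,\beta,\gamma)$ up to a common scalar. That algebraic fact is the real crux (the paper itself leaves it implicit), and it can be checked directly: if two non-proportional equations $\alpha Q+B(\beta,\cdot)+\gamma$ had the same codimension-one zero locus, then taking a suitable linear combination shows this locus lies in an affine hyperplane, or else forces two distinct translated level sets of $Q$ to coincide; both are impossible for a nondegenerate $Q$ with $p+q\ge 2$. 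Your proposal needs some argument of this kind in place of the density argument.
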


\begin{proof}
Let
$$
\overrightarrow{a} = (a_0,a_1,\dots,a_{p+q},a_{p+q+1}) \in \mathbb{R}^{p+1,q+1}
$$
be a non-zero vector.
The corresponding hyperplane in $\mathbb{R}P^{p+q+1}$ is
$$
h = \bigl\{ [\overrightarrow{\xi}] \in \mathbb{R}P^{p+q+1} ;\:
a_0\xi_0 + \dots + a_p\xi_p - a_{p+1}\xi_{p+1} - \dots - a_{p+q+1}\xi_{p+q+1} =0
\bigr\}.
$$
Then the pullback $\iota^{-1} (N^{p,q} \cap h)$ is given by an equation
\begin{align*}
0 &= a_0 \frac{1-Q(\overrightarrow{x})}2 + a_1x_1 + \dots + a_px_p
-a_{p+1}x_{p+1} - \dots - a_{p+q}x_{p+q} - a_{p+q+1}\frac{1+Q(\overrightarrow{x})}2 \\
&= \alpha_0 Q(\overrightarrow{x}) + B(\beta_0,\overrightarrow{x}) + \gamma_0,
\end{align*}
where
$$
\alpha_0 = -\frac12(a_0+a_{p+q+1}), \qquad
\beta_0 = (a_1,\dots,a_{p+q}), \qquad
\gamma_0 = \frac12(a_0 - a_{p+q+1}).
$$
This proves that every $H \in {\cal H}$ can be realized as
$\iota^{-1} (N^{p,q} \cap h)$ and that the hyperplane $h$ in
$\mathbb{R}P^{p+q+1}$ is uniquely determined by the hypersurface $H$.
\end{proof}

This lemma suggests that the analogues of hyperboloids and affine hyperplanes
in $N^{p,q}$ should be defined as intersections with hyperplanes in
$\mathbb{R}P^{p+q+1}$.
However, in some cases, these intersections may have codimension strictly
greater than $1$. We now turn our attention to such cases.

\begin{df}
Let $h$ be a hyperplane in $\mathbb{R}P^{p+q+1}$ given by the equation
$\hat{B}(\overrightarrow{a},\overrightarrow{\xi})=0$ for some
non-zero $\overrightarrow{a} \in \mathbb{R}^{p+1,q+1}$.
The {\em sign of $h$} is the sign of $\hat{Q}(\overrightarrow{a})$;
it can be positive, negative or zero.
\end{df}

Since $\overrightarrow{a}$ is uniquely determined by $h$ up to
proportionality, this notion makes sense.

\begin{lem}  \label{intersection-lem}
Let $h$ be a hyperplane in $\mathbb{R}P^{p+q+1}$. If $p, q \ne 0$, then
the intersection $N^{p,q} \cap h$ always has codimension $1$ in $N^{p,q}$.

If $p$ or $q$ is $0$, the dimension of $N^{p,q} \cap h$ is completely determined
by the sign of $h$:
\begin{enumerate}
\item
The intersection has codimension $1$ in $N^{p,q}$ if and only if
$q=0$ and $h$ has positive sign or $p=0$ and $h$ has negative sign;
\item
The intersection consists of a single point if and only if
$h$ has sign zero;
\item
The intersection is empty if and only if
$q=0$ and $h$ has negative sign or $p=0$ and $h$ has positive sign.
\end{enumerate}
\end{lem}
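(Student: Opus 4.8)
The plan is to pull everything back along the $2$-to-$1$ covering $\pi : S^p \times S^q \to N^{p,q}$, which is a local diffeomorphism, so that $\dim(N^{p,q}\cap h)$ equals the dimension of $\tilde\Sigma := \pi^{-1}(N^{p,q}\cap h) = (S^p\times S^q)\cap \tilde h$, where $\tilde h \subset \mathbb{R}^{p+1,q+1}$ is the linear hyperplane $\hat B(\overrightarrow a, \cdot)=0$. I would split $\overrightarrow a = (\overrightarrow{a}\,', \overrightarrow{a}\,'')$ into its first $p+1$ and last $q+1$ coordinates, so that $\hat Q(\overrightarrow a) = |\overrightarrow{a}\,'|^2 - |\overrightarrow{a}\,''|^2$ for the Euclidean norms. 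Writing a point of $S^p\times S^q$ as $(\overrightarrow u, \overrightarrow v)$ with $|\overrightarrow u| = |\overrightarrow v| = 1$, the hyperplane equation becomes $F(\overrightarrow u, \overrightarrow v) := \langle \overrightarrow{a}\,', \overrightarrow u\rangle - \langle \overrightarrow{a}\,'', \overrightarrow v\rangle = 0$, and $\tilde\Sigma = F^{-1}(0)$. Thus the whole lemma reduces to computing the dimension of the zero set of this explicit smooth function.

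First I would locate the critical points of $F$. Since $\delta\overrightarrow u$ ranges over $T_{\overrightarrow u}S^p$ and $\delta\overrightarrow v$ over $T_{\overrightarrow v}S^q$, one has $dF = 0$ exactly when $\overrightarrow{a}\,' \parallel \overrightarrow u$ and $\overrightarrow{a}\,'' \parallel \overrightarrow v$; evaluating $F$ there shows that (when $\overrightarrow{a}\,'$ and $\overrightarrow{a}\,''$ are both nonzero) the critical values are $\pm|\overrightarrow{a}\,'| \pm |\overrightarrow{a}\,''|$, so $0$ is a critical value if and only if $|\overrightarrow{a}\,'| = |\overrightarrow{a}\,''|$, i.e. $\hat Q(\overrightarrow a) = 0$; the degenerate cases $\overrightarrow{a}\,' = 0$ or $\overrightarrow{a}\,'' = 0$ force $\hat Q(\overrightarrow a) \ne 0$ and are handled the same way. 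By Cauchy--Schwarz the range of $F$ is the interval $[-(|\overrightarrow{a}\,'| + |\overrightarrow{a}\,''|),\, |\overrightarrow{a}\,'| + |\overrightarrow{a}\,''|]$, which has $0$ in its interior since $\overrightarrow a \ne 0$; so when $S^p\times S^q$ is connected (i.e. $p,q\ne0$) the set $\tilde\Sigma$ is nonempty. Hence when $p, q\ne0$ and $\hat Q(\overrightarrow a)\ne0$ the value $0$ is regular and lies in the image, and the regular value theorem gives that $\tilde\Sigma$ is a smooth manifold of codimension $1$.

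The main obstacle is the sign-zero case $\hat Q(\overrightarrow a) = 0$ with $p,q\ne0$, where $0$ is a genuine critical value (attained at the antipodal pair $(\pm\overrightarrow{a}\,'/|\overrightarrow{a}\,'|, \pm\overrightarrow{a}\,''/|\overrightarrow{a}\,''|)$) so the regular value theorem fails at two points. Here I would argue that these two points do not lower the dimension: away from them $F$ still has $0$ as a regular value, so $\tilde\Sigma$ minus two points is a nonempty smooth $(p+q-1)$-manifold --- nonempty because the great torus $\{\langle\overrightarrow{a}\,',\overrightarrow u\rangle = \langle\overrightarrow{a}\,'',\overrightarrow v\rangle = 0\} \cong S^{p-1}\times S^{q-1}$ lies in $\tilde\Sigma$ and consists of regular points of $F$. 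Thus $\dim\tilde\Sigma \ge p+q-1$; on the other hand $F$ is real-analytic and not identically zero, so its zero set has empty interior in the connected manifold $S^p\times S^q$ and therefore dimension $\le p+q-1$. Concretely, near each critical point the Morse lemma puts $F$ in the form $\tfrac12(|\overrightarrow z|^2 - |\overrightarrow y|^2)$ with $\overrightarrow y\in\mathbb R^p$, $\overrightarrow z\in\mathbb R^q$, so locally $\tilde\Sigma$ is the cone $\{|\overrightarrow y| = |\overrightarrow z|\}$, which still has dimension $p+q-1$. Either way the intersection has codimension $1$, completing the case $p,q\ne0$.

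Finally, when $p$ or $q$ is $0$, say $q=0$, the factor $S^q = S^0 = \{\pm1\}$ is disconnected and the above cone argument breaks down precisely because the ``$\overrightarrow z$ directions'' disappear; I would treat this case by direct inspection. For each $v = \pm1$ the equation becomes $\langle\overrightarrow{a}\,',\overrightarrow u\rangle = \pm a_{p+1}$, whose solution set on $S^p$ is the intersection of the sphere with an affine hyperplane, hence a $(p-1)$-sphere, a single point, or empty according as $|a_{p+1}|$ is less than, equal to, or greater than $|\overrightarrow{a}\,'|$, i.e. according to the sign of $\hat Q(\overrightarrow a) = |\overrightarrow{a}\,'|^2 - a_{p+1}^2$. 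This yields codimension $1$ for positive sign, a single point for sign zero (the two solutions, one for each $v$, being antipodal in $\mathbb R^{p+1,1}$ and hence identified by $\pi$), and the empty set for negative sign; the case $p=0$ is identical with the roles of $\overrightarrow{a}\,'$, $\overrightarrow{a}\,''$ and the signs interchanged, which accounts for the reversal of ``positive/negative'' between parts 1 and 3 of the statement.
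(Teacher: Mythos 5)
Your proof is correct, and it takes a genuinely different route from the paper's. The paper argues with quadratic-form algebra directly on $\mathbb{R}^{p+1,q+1}$: for $p,q\ne 0$ it simply notes that the restriction of $\hat Q$ to the vector space underlying $h$ is indefinite, so its null cone --- and hence $N^{p,q}\cap h$ --- has codimension $1$; for $q=0$ (or $p=0$) it normalizes $\overrightarrow{a}$ by a rotation in $O(p+1)\subset O(p+1,1)$ so that $h$ becomes $\{a_0\xi_0+a_{p+1}\xi_{p+1}=0\}$, substitutes into the defining equation of $N^{p,0}$, and reads off the trichotomy from the sign of $1-(a_0)^2/(a_{p+1})^2$. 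You instead pull everything back to the double cover $S^p\times S^q$ and study the zero set of $F(\overrightarrow{u},\overrightarrow{v})=\langle\overrightarrow{a}{}',\overrightarrow{u}\rangle-\langle\overrightarrow{a}{}'',\overrightarrow{v}\rangle$ by differential topology: the regular value theorem when $\hat Q(\overrightarrow{a})\ne 0$, a Morse-lemma analysis at the two critical points when $\hat Q(\overrightarrow{a})=0$, and elementary sphere-versus-affine-hyperplane geometry when $p$ or $q$ is $0$ (this last part is essentially the paper's computation, phrased via Cauchy--Schwarz instead of a normalizing rotation). The trade-off is instructive: the paper's argument is far shorter, but its key sentence for $p,q\ne 0$ --- ``the restriction of $\hat Q$ to $h$ is indefinite and the codimension is $1$'' --- is asserted rather than proved, and it silently covers the degenerate situation where $\hat Q|_h$ has a kernel ($h$ tangent to the null cone), in which case the intersection is a singular quadric cone; your Morse-lemma computation at the two critical points is precisely the missing justification that this singular set still has dimension $p+q-1$. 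What your route buys, at the cost of more machinery, is this extra rigor plus a description of the actual structure of the intersections (smooth hypersurface versus cone with two singular points, and, when $q=0$, the two antipodal sphere components identified by $\pi$); what the paper's route buys is brevity and a purely algebraic argument in the spirit of the rest of Section 4.
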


\begin{proof}
The dimension of $N^{p,q} \cap h$ can be computed by finding
the codimension of the intersection in the vector space underlying $h$.
If $p, q \ne 0$, then the restriction of $\hat{Q}$ to $h$ is indefinite
and the codimension is $1$. This leaves us with the cases $p$ or $q$ is $0$.

For concreteness, suppose that $q=0$ (the case $p=0$ follows similarly).
Let the equation of $h$ be
$\hat{B}(\overrightarrow{a},\overrightarrow{\xi})=0$.
We can rotate the hyperplane by an element of $O(p+1) \subset O(p+1,1)$
so that all coordinates of $\overrightarrow{a}$ are zero,
except possibly the first and last ones.
Doing so leaves the dimension of the intersection as well as the sign of $h$
invariant. Therefore, we may view $h$ as being given by the equation
$a_0\xi_0+a_{p+1}\xi_{p+1}=0$. 

Suppose first that $a_{p+1}=0$. Then $h$ is given by $\xi_0=0$ and
$N^{p,0} \cap h $ is given by
$$
(\xi_1)^2+\dots+(\xi_p)^2-(\xi_{p+1})^2 =0,
\qquad \xi_0=0.
$$
This intersection has codimension $1$ in $N^{p,q}$ and
$\hat{Q}(\overrightarrow{a}) >0$.

Next, suppose that $a_{p+1} \ne 0$.
Then the equation defining $h$ can be rewritten as
$\xi_{p+q+1}=-\frac{a_0}{a_{p+q+1}}\xi_0$ and $N^{p,0} \cap h$ is given by 
$$
\left( 1- \frac{(a_0)^2}{(a_{p+1})^2} \right)(\xi_0)^2 
+(\xi_1)^2+\dots+(\xi_p)^2 =0,  \qquad
\xi_{p+1} =-\frac{a_0}{a_{p+1}}\xi_0.
$$
We have three cases:
\begin{enumerate}
\item
If $1-\frac{(a_0)^2}{(a_{p+1})^2} <0$, then $N^{p,0} \cap h$ has codimension $1$
in $N^{p,0}$ and $\hat{Q}(\overrightarrow{a}) >0$;
\item
If $1-\frac{(a_0)^2}{(a_{p+1})^2} >0$, then the intersection is empty in $N^{p,0}$
and $\hat{Q}(\overrightarrow{a}) <0$;
\item
If $1-\frac{(a_0)^2}{(a_{p+1})^2}=0$, then $a_0=\pm a_{p+1}$,
the intersection consists of a single point
$[1:0:\dots:0:-1]$ or $[1:0:\dots:0:1]$ in $\mathbb{R}P^{p+1}$
and $\hat{Q}(\overrightarrow{a}) =0$.
\end{enumerate}
\end{proof}

With this lemma in mind, we introduce our key definition:

\begin{df}  \label{def}
A {\em conformal quadratic hypersurface $H$ in $N^{p,q}$} is the intersection
of a hyperplane $h \subset \mathbb{R}P^{p+q+1}$ with $N^{p,q}$.
Additionally, if $q=0$ (or $p=0$), we require the hyperplane $h$
to have positive (respectively negative) sign.

We denote by $\hat{\cal H}$ the set of all conformal quadratic hypersurfaces
in $N^{p,q}$, and define the {\em sign of $H \in \hat{\cal H}$} to be
the sign of the corresponding hyperplane $h$.
\end{df}

It is easy to verify that the hyperplane $h \subset \mathbb{R}P^{p+q+1}$
is uniquely determined by a conformal quadratic hypersurface
$H \in \hat{\cal H}$. In particular, the sign of $H$ is well-defined.
When $p$ or $q$ is $0$, these conformal quadratic hypersurfaces are precisely
the closures in $N^{p,q}$ of spheres and affine planes in $\mathbb{R}^{p+q}$ of
dimension $p+q-1$.

\section{The Group $O(p+1,q+1)$ Acting on the Conformal Quadratic Surfaces in
$N^{p,q}$}  \label{S5}

In this section we prove our main results of this paper.
For convenience we state the following result (see, for example, \cite{LAM}).

\begin{lem}[Witt's Extension Theorem]  \label{Witt}
Let $V$ be a finite-dimensional vector space (over $\mathbb{R}$)
together with a nondegenerate symmetric bilinear form $B$.
If $\phi :W_1 \to W_2$ is an isometric isomorphism of two subspaces
$W_1, W_2 \subset V$, then $\phi$ extends to an isometric isomorphism
$\hat{\phi} : V \to V$.
\end{lem}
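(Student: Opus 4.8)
The plan is to build $\hat\phi$ out of reflections, reducing the problem to the motion of a single vector at a time. The elementary building block is the reflection $\tau_u(x) = x - \frac{2B(u,x)}{B(u,u)}\,u$ in an anisotropic vector $u$ (i.e. $B(u,u)\neq 0$), which is an isometry of $V$ sending $u$ to $-u$. I first record the following: if $v,w\in V$ satisfy $B(v,v)=B(w,w)\neq 0$, then some isometry of $V$ carries $v$ to $w$. Indeed, adding the norms of $v-w$ and $v+w$ gives $4B(v,v)\neq 0$, so at least one of these vectors is anisotropic; a short computation shows $\tau_{v-w}(v)=w$ when $v-w$ is anisotropic, while $\tau_{w}\circ\tau_{v+w}(v)=w$ in the remaining case. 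This is the only place reflections enter, and it is available precisely when the vector being moved is anisotropic.

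With this in hand I would prove the theorem first for \emph{nondegenerate} $W_1$, by induction on $d=\dim W_1$ (the case $d=0$ being trivial). Choose $v\in W_1$ with $B(v,v)\neq 0$ and set $w=\phi(v)$, so $B(w,w)=B(v,v)\neq 0$; pick an isometry $\sigma$ of $V$ with $\sigma(v)=w$ and replace $\phi$ by $\sigma^{-1}\circ\phi$, so that now $\phi(v)=v$. Since $\langle v\rangle$ is nondegenerate we have the orthogonal decomposition $V=\langle v\rangle\perp v^{\perp}$ with $B|_{v^{\perp}}$ again nondegenerate, and $\phi$ restricts to an isometric isomorphism $W_1\cap v^{\perp}\to W_2\cap v^{\perp}$ between \emph{nondegenerate} subspaces of $v^{\perp}$ of dimension $d-1$. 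The induction hypothesis applied inside $v^{\perp}$ extends this restriction, and pairing the extension with the identity on $\langle v\rangle$ yields the required isometry of $V$.

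The genuine difficulty, and the step I expect to be the main obstacle, is the case in which $W_1$ (hence $W_2$) is \emph{totally isotropic}: then $W_1$ contains no anisotropic vector and the reflection lemma is unavailable. Here the idea is to restore nondegeneracy by enlarging both subspaces to hyperbolic ones. Given a basis $e_1,\dots,e_d$ of $W_1$, nondegeneracy of $B$ on $V$ lets me produce isotropic vectors $f_1,\dots,f_d$ with $B(e_i,f_j)=\delta_{ij}$ and $B(f_i,f_j)=0$, so that $H_1=\langle e_1,\dots,e_d,f_1,\dots,f_d\rangle$ is nondegenerate; carrying out the same construction for the basis $\phi(e_1),\dots,\phi(e_d)$ of $W_2$ gives a nondegenerate $H_2$ and dual vectors $f_1',\dots,f_d'$. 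Then $e_i\mapsto\phi(e_i)$, $f_i\mapsto f_i'$ defines an isometric isomorphism $H_1\to H_2$ extending $\phi$, and since $H_1$ is nondegenerate (it contains anisotropic vectors such as $e_i+f_i$) the previously established case applies to extend it to an isometry of $V$; its restriction to $W_1$ is $\phi$. Verifying that the $f_i$ exist and that $H_1,H_2$ are truly nondegenerate is exactly where the hypothesis that $B$ is nondegenerate on all of $V$ is used.

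Finally, an arbitrary $W_1$ decomposes orthogonally as $W_1=R_1\perp N_1$, where $R_1=W_1\cap W_1^{\perp}$ is the (totally isotropic) radical and $N_1$ is a nondegenerate complement; since an isometry carries radical to radical, $W_2=R_2\perp N_2$ correspondingly. Extending $\phi|_{N_1}$ by the nondegenerate case and passing to the nondegenerate space $N_1^{\perp}$, the problem reduces to extending the isometry $\phi|_{R_1}$ between totally isotropic subspaces, which is handled by the isotropic case above. Composing the three extensions gives $\hat\phi$.
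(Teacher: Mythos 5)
The paper never proves this lemma: it is stated ``for convenience'' as a known result, with a pointer to Lam's \emph{Introduction to Quadratic Forms over Fields}, so there is no in-paper argument to compare yours against. What you have written is, in essence, the standard textbook proof of Witt's extension theorem over a field of characteristic $\neq 2$ (essentially the one found in the cited reference): reflections $\tau_u$ move one anisotropic vector to another of the same norm, induction on dimension handles nondegenerate $W_1$, hyperbolic completion handles totally isotropic $W_1$, and splitting off the radical $W_1 = R_1 \perp N_1$ reduces the general case to the previous two. I checked the steps and they are sound: the identity $B(v-w,v-w)+B(v+w,v+w)=4B(v,v)\neq 0$ does guarantee at least one usable reflection, with $\tau_w\circ\tau_{v+w}$ correctly covering the case where $v-w$ is isotropic; the inductive step correctly exploits $V=\langle v\rangle\perp v^{\perp}$ after normalizing $\phi(v)=v$; and the final reduction correctly uses that an isometric isomorphism carries the radical of $W_1$ onto the radical of $W_2$, and that after extending $\phi|_{N_1}$ the two radicals sit inside the nondegenerate space $N_1^{\perp}$. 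The one place where you invoke an unproved sub-lemma is the existence of the dual isotropic family $f_1,\dots,f_d$ with $B(e_i,f_j)=\delta_{ij}$ and $B(f_i,f_j)=0$; this needs its own short induction (use nondegeneracy to find a vector pairing to $1$ with $e_1$ and to $0$ with $e_2,\dots,e_d$, correct it to be isotropic, split off the hyperbolic plane $\langle e_1,f_1\rangle$, and recurse in its orthogonal complement), but the statement is true and standard. So: your proof is correct and complete up to that routine sub-lemma; it simply supplies the argument that the paper chose to import by citation rather than prove.
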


The first version of our main result can be stated as follows:

\begin{thm}  \label{main-thm}
The group $O(p+1,q+1)$ preserves the set $\hat{\cal H}$.
Furthermore, two elements of $\hat{\cal H}$ can be mapped one into the other
by the action of $O(p+1,q+1)$ if and only if they have the same sign.
In particular, if $p$ or $q$ is $0$ the action of $O(p+1,q+1)$ on $\hat{\cal H}$
is transitive; if $p, q \ne 0$, this action has exactly three orbits.
\end{thm}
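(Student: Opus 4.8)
The plan is to recast the whole statement in terms of the normal vectors $\overrightarrow{a} \in \mathbb{R}^{p+1,q+1}$ defining the hyperplanes, and to observe that the geometric action of $O(p+1,q+1)$ on hypersurfaces is nothing but its linear action on these normal vectors, taken modulo scaling. First I would record the dictionary: a conformal quadratic hypersurface $H = N^{p,q} \cap h$ is determined up to scaling by the vector $\overrightarrow{a}$ with $h = \{ [\overrightarrow{\xi}] : \hat{B}(\overrightarrow{a}, \overrightarrow{\xi}) = 0 \}$, and the sign of $H$ is the sign of $\hat{Q}(\overrightarrow{a})$.

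To prove preservation, I take $M \in O(p+1,q+1)$. Since $\psi_M$ comes from an invertible linear map that preserves $N^{p,q}$, we have $\psi_M(N^{p,q} \cap h) = N^{p,q} \cap \psi_M(h)$, and $\psi_M(h)$ is again a hyperplane. Using that $M$ preserves $\hat{B}$, the identity $\hat{B}(\overrightarrow{a}, M^{-1}\overrightarrow{\eta}) = \hat{B}(M\overrightarrow{a}, \overrightarrow{\eta})$ shows that the normal vector of $\psi_M(h)$ is precisely $M\overrightarrow{a}$. Since $\hat{Q}(M\overrightarrow{a}) = \hat{Q}(\overrightarrow{a})$, the image $\psi_M(H)$ has the same sign as $H$; in particular the sign constraint in Definition \ref{def} is respected when $p$ or $q$ is $0$, so $\psi_M$ preserves $\hat{\cal H}$. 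This also proves the ``only if'' direction of the orbit criterion: hypersurfaces in the same orbit share a sign.

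For the converse, suppose $H, H'$ have the same sign, with normal vectors $\overrightarrow{a}, \overrightarrow{a}'$; it suffices to produce $M \in O(p+1,q+1)$ with $M\overrightarrow{a}$ proportional to $\overrightarrow{a}'$. After rescaling so that $\hat{Q}(\overrightarrow{a}) = \hat{Q}(\overrightarrow{a}')$ equals $+1$, $-1$, or $0$ according to the common sign, the linear map sending $\overrightarrow{a} \mapsto \overrightarrow{a}'$ is an isometric isomorphism of the one-dimensional subspaces they span. Witt's Extension Theorem (Lemma \ref{Witt}) extends it to an isometry $M$ of all of $\mathbb{R}^{p+1,q+1}$, i.e. $M \in O(p+1,q+1)$, whence $\psi_M(H) = H'$.

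Finally, part 3 follows by bookkeeping of signs. When $p$ or $q$ is $0$, Definition \ref{def} forces a single fixed sign on every element of $\hat{\cal H}$, so the orbit criterion gives transitivity. When $p, q \ne 0$, Lemma \ref{intersection-lem} guarantees every hyperplane meets $N^{p,q}$ in codimension $1$, and because $\hat{Q}$ is indefinite with both positive and negative directions, normal vectors of all three signs exist; the orbit criterion then yields exactly three orbits. I expect the sufficiency direction to be the crux, with the sign-zero case warranting a moment's care, since one cannot normalize a null $\overrightarrow{a}$ to $\hat{Q} = \pm 1$ — but the map between two null lines is still an isometry, as both quadratic values vanish.
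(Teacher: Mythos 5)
Your proof is correct and follows essentially the same route as the paper's: it passes to the normal vectors $\overrightarrow{a}$, deduces the ``only if'' direction from $\hat{Q}(M\overrightarrow{a})=\hat{Q}(\overrightarrow{a})$ together with the uniqueness of the defining hyperplane, and obtains the ``if'' direction by applying Witt's Extension Theorem to the isometry between the lines $\mathbb{R}\overrightarrow{a}$ and $\mathbb{R}\overrightarrow{a}'$ (your normalization to $\hat{Q}=\pm1$ or $0$ is just a cosmetic variant of the paper's choice of a scalar $\lambda$ with $\hat{Q}(\overrightarrow{a})=\lambda^2\hat{Q}(\overrightarrow{a}')$). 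If anything, you are slightly more explicit than the paper on two points it leaves implicit: that the sign constraint in Definition \ref{def} is respected when $p$ or $q$ is $0$, and that all three signs actually occur when $p,q\ne 0$, giving exactly three orbits.
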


\begin{proof}
By Definition \ref{def}, an element of $\hat{\cal H}$ is the intersection of
$N^{p,q}$ with a hyperplane in $\mathbb{R}P^{p+q+1}$.
Since $O(p+1,q+1)$ acts linearly on $\mathbb{R}P^{p+q+1}$ and maps $N^{p,q}$ into
itself, $\hat{\cal H}$ is certainly preserved by this action.

For the second part, let $H,H' \in \hat{\cal H}$ be arbitrary.
Then $H = N^{p,q} \cap h$ and $H' = N^{p,q} \cap h'$,
where $h$ and $h'$ are hyperplanes in $\mathbb{R}P^{p+q+1}$ given by
equations $\hat{B}(\overrightarrow{a},\overrightarrow{\xi})=0$ and
$\hat{B}(\overrightarrow{a}',\overrightarrow{\xi})=0$ respectively.
If there exists an $M \in O(p+1,q+1)$ mapping $H$ into $H'$, then
$M$ must map $h$ into $h'$, and $M \overrightarrow{a}$ must be proportional
to $\overrightarrow{a}'$.
That is, $M \overrightarrow{a} = \lambda \overrightarrow{a}'$,
for some $\lambda \ne 0$. Hence
$$
\hat{Q}(\overrightarrow{a})
= \hat{Q}(M \overrightarrow{a})
= \hat{Q}(\lambda \overrightarrow{a}')
= \lambda^2 \hat{Q}(\overrightarrow{a}').
$$
Thus, $\hat{Q}(\overrightarrow{a})$ and $\hat{Q}(\overrightarrow{a}')$
have the same sign, which by definition means that $H$ and $H'$
have the same sign.

Conversely, suppose that $H$ and $H'$ have the same sign.
In other words, $\hat{Q}(\overrightarrow{a})$ and
$\hat{Q}(\overrightarrow{a}')$ have the same sign.
Therefore, there exists some $\lambda \ne 0$ such that
\begin{equation}  \label{equal}
\hat{Q}(\overrightarrow{a})
= \lambda^2 \cdot \hat{Q}(\overrightarrow{a}')
= \hat{Q}(\lambda \overrightarrow{a}').
\end{equation}
Let $m : \mathbb{R} \overrightarrow{a} \to \mathbb{R} \overrightarrow{a}'$
be a linear transformation determined by
$\overrightarrow{a} \mapsto \lambda \overrightarrow{a}'$;
by (\ref{equal}), this is an isometric isomorphism between two
$1$-dimensional subspaces of $\mathbb{R}^{p+1,q+1}$.
By Witt's Extension Theorem (Lemma \ref{Witt}), there exists an
$M \in O(p+1,q+1)$ such that
$M \overrightarrow{a} =\lambda \overrightarrow{a}'$.
Since $M$ preserves $\hat{B}$, $M(h)= h'$;
that is, $M \cdot H = H'$, as desired.
\end{proof}

\begin{ex}
Let $p=q=1$. Consider three curves in $\mathbb R^{1,1}$:
\begin{equation}  \label{curves}
(x_1)^2-(x_2)^2=1, \qquad
(x_1)^2-(x_2)^2=-1, \qquad
(x_1)^2-(x_2)^2=0.
\end{equation}
These curves can be realized as intersections of $N^{1,1}$ with hyperplanes in
$\mathbb RP^3$ given by equations
$$
\xi_0=0, \qquad
\xi_3=0, \qquad
\xi_0-\xi_3=0
$$
respectively. Then vectors orthogonal to these hyperplanes are
$$
\overrightarrow{a}_1 = (1,0,0,0), \qquad
\overrightarrow{a}_2 = (0,0,0,1), \qquad
\overrightarrow{a}_3 = (1,0,0,1).
$$
Hence the signs of the three curves (\ref{curves}) are positive, negative and
zero respectively.
From Theorem \ref{main-thm} we conclude that there is no M\"{o}bius
transformation on $\mathbb R^{1,1}$ mapping one of these curves into another.
Moreover, any conformal quadratic curve in $N^{1,1}$ can be mapped
into exactly one of these curves by a M\"{o}bius transformation.
\end{ex}

When $p$ or $q$ is $0$, $N^{p,q}$ is the one-point compactification of
$\mathbb{R}^{p+q}$ and $\hat{\cal H}$ is the set of hyperspheres and
closures of affine hyperplanes in $\mathbb{R}^{p+q}$.

\begin{lem}  \label{codim1}
Let $p$ or $q$ be $0$ and $d=1,\dots, p+q-1$. Then every $d$-dimensional
sphere or affine plane in $\mathbb{R}^{p+q}$ can be realized as
an intersection of $N^{p,q}$ with a $(d+1)$-dimensional projective subspace
$h \subset \mathbb{R}P^{p+q+1}$ such that the restriction of $\hat{Q}$ to
the underlying vector space of $h$ is nondegenerate and indefinite.
\end{lem}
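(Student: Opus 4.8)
The plan is to realize the given surface $S$ as the common solution set of exactly $p-d$ of the codimension-$1$ conformal quadratic hypersurfaces studied earlier, and then to read off the geometry of the resulting projective subspace $h$ from the Gram matrix of the associated normal vectors. By the symmetry between $p$ and $q$ I may assume $q=0$, so that $\hat Q$ has signature $(p+1,1)$ on $\mathbb{R}^{p+1,1}$ and $N^{p,0}$ is the one-point compactification of $\mathbb{R}^p$ with $Q$ positive definite. Writing $S=\bigcap_{i=1}^{p-d} H_i$ for suitable $H_i\in{\cal H}$ and letting $h_i$ be the hyperplane with normal $\overrightarrow{a}_i$ furnished by the earlier lemma, I set $h=\bigcap_i h_i$, so that $\iota^{-1}(N^{p,0}\cap h)=\bigcap_i \iota^{-1}(N^{p,0}\cap h_i)=\bigcap_i H_i=S$. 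The underlying vector space of $h$ is $V=W^{\perp}$, where $W=\operatorname{span}(\overrightarrow{a}_1,\dots,\overrightarrow{a}_{p-d})$, so the whole statement reduces to showing that $W$ is a nondegenerate positive-definite subspace of dimension $p-d$: then $\dim V=d+2$, $h$ has projective dimension $d+1$, and the orthogonal splitting $\mathbb{R}^{p+1,1}=W\oplus V$ forces $\hat Q|_V$ to have signature $(d+1,1)$, which is nondegenerate and indefinite.

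It therefore suffices to compute the Gram matrix $\hat B(\overrightarrow{a}_i,\overrightarrow{a}_j)$ and show it is positive definite. First I treat the affine-plane case: here $S$ is cut out by $p-d$ affine hyperplanes $B(\beta_i,\overrightarrow{x})+\gamma_i=0$ with $\beta_1,\dots,\beta_{p-d}$ linearly independent, and the recipe of the earlier lemma gives normals of the form $\overrightarrow{a}_i=(\gamma_i,\beta_i,-\gamma_i)$. A short computation then yields $\hat B(\overrightarrow{a}_i,\overrightarrow{a}_j)=B(\beta_i,\beta_j)$, so the Gram matrix is exactly the Gram matrix of the $\beta_i$ with respect to the (positive-definite) form $Q$; this is positive definite and in particular nonsingular, giving both the linear independence of the $\overrightarrow{a}_i$ and the desired signature.

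The sphere case is where the one genuine idea enters, and I expect it to be the main obstacle. A $d$-sphere $S$ sits inside its affine hull $A$, a $(d+1)$-dimensional affine plane cut out by $p-d-1$ hyperplanes $B(\beta_i,\overrightarrow{x})+\gamma_i=0$, and $S=\Sigma\cap A$ for a genuine sphere $\Sigma$. The point is to choose $\Sigma$ to be the sphere of the same radius $r$ centered at the center $\overrightarrow{c}$ of $S$; since $\overrightarrow{c}\in A$ one has $B(\beta_i,\overrightarrow{c})+\gamma_i=0$ for every $i$. Computing the normals as before, the self-pairing of the sphere's normal comes out to $\hat Q(\overrightarrow{a})=4r^2>0$, while each cross term between the sphere's normal and a hyperplane normal is a multiple of $B(\beta_i,\overrightarrow{c})+\gamma_i$ and hence vanishes precisely because $\overrightarrow{c}\in A$. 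Thus the centering makes the Gram matrix block diagonal, with the positive-definite Gram matrix of the $\beta_i$ in one block and the positive scalar $4r^2$ in the other; it is again positive definite and nonsingular.

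With $W$ shown to be positive definite of dimension $p-d$ in both cases, the reduction of the first paragraph completes the proof: $V=W^{\perp}$ has dimension $d+2$, the projective subspace $h$ has dimension $d+1$, and $\hat Q|_V$ is nondegenerate of signature $(d+1,1)$, hence indefinite, while $\iota^{-1}(N^{p,0}\cap h)=S$ by construction. The case $p=0$ follows by the same argument with all signs reversed, $\hat Q|_V$ then having signature $(1,d+1)$.
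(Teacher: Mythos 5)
Your proof is correct, but it takes a genuinely different route from the paper's. The paper argues softly: it notes that every $d$-dimensional sphere or affine plane is an intersection of two $(d+1)$-dimensional spheres or affine planes, so induction on codimension realizes $S$ as $N^{p,q} \cap h$ with $h$ of projective dimension $d+1$; it then records how $\dim (N^{p,q} \cap h)$ is governed by the signature type of $\hat{Q}|_h$ (indefinite: codimension $1$; definite: empty intersection; degenerate semidefinite: codimension equal to the rank) and observes that in ambient signature $(p+1,1)$ the definite and degenerate cases would force $N^{p,q} \cap h$ to be empty or a single point, contradicting $\dim S = d \ge 1$; hence $\hat{Q}|_h$ must be nondegenerate and indefinite. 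You instead build $h$ explicitly as the intersection of $p-d$ hyperplanes --- those cutting out the affine hull of $S$ plus the one attached to the hypersphere concentric with $S$ --- and compute the Gram matrix of the normals; your centering trick is exactly what makes that matrix block diagonal, and your formulas check out: $\overrightarrow{a}_i = (\gamma_i, \beta_i, -\gamma_i)$ gives $\hat{B}(\overrightarrow{a}_i, \overrightarrow{a}_j) = B(\beta_i,\beta_j)$, the sphere's normal has $\hat{Q}(\overrightarrow{a}) = 4r^2$, and the cross terms equal $-2\bigl(B(\beta_i,\overrightarrow{c})+\gamma_i\bigr) = 0$ since $\overrightarrow{c}$ lies in the affine hull. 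Passing to $W^{\perp}$ then gives the signature. Your computation yields strictly more than the statement asks: it exhibits explicit normal vectors and pins down the exact signature $(d+1,1)$ (resp. $(1,d+1)$) of $\hat{Q}|_h$, from which the transitivity assertion of Theorem \ref{main-thm2} follows immediately via Witt's theorem. What the paper's argument buys in exchange is brevity --- no computation at all --- and reusability: its dimension-count observation is quoted verbatim to prove the subsequent lemma characterizing, for arbitrary $p,q$, when $N^{p,q} \cap h$ has dimension $d$, a case your positive-definiteness argument does not cover since it leans on the definiteness of $Q$ when $p$ or $q$ is $0$.
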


\begin{proof}
Note that every $d$-dimensional sphere or affine plane can be realized as an
intersection of two $(d+1)$-dimensional spheres or affine planes.
Hence, by induction on codimension, every $d$-dimensional sphere or affine
plane can be realized as an intersection of $N^{p,q}$ with a projective
subspace $h$ of dimension $d+1$ in $\mathbb{R}P^{p+q+1}$.

If $h$ is a $(d+1)$-dimensional projective subspace of $\mathbb{R}P^{p+q+1}$,
the dimension of $N^{p,q} \cap h$ can be computed by finding
the codimension of the intersection in the vector space underlying $h$.
If $\hat{Q}|_h$ is indefinite, the codimension is $1$;
if $\hat{Q}|_h$ is definite, $N^{p,q} \cap h = \varnothing$;
and if $\hat{Q}|_h$ is semidefinite, the codimension is the rank of
$\hat{Q}|_h$.

Now, when $p$ or $q$ is $0$, if $\hat{Q}|_h$ is semidefinite, then it must
be nondegenerate, and the case $\hat{Q}|_h$ has rank $1$ cannot happen.
\end{proof}

We can extend Theorem \ref{main-thm} to spheres and affine planes of lower
dimension.

\begin{thm}  \label{main-thm2}
When $p$ or $q$ is $0$, the group $O(p+1,q+1)$ acts transitively on the
set of $d$-dimensional spheres and affine planes in $\mathbb{R}^{p+q}$,
where $d=1,\dots, p+q-1$.
\end{thm}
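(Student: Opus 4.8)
The plan is to mirror the proof of Theorem \ref{main-thm}, replacing hyperplanes by higher-codimension projective subspaces and invoking Witt's Extension Theorem on the underlying vector subspaces. Assume without loss of generality that $q=0$ (the case $p=0$ is entirely analogous), so that $\hat{Q}$ has signature $(p+1,1)$ on $\mathbb{R}^{p+1,1}$.

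First I would use Lemma \ref{codim1} to realize each given $d$-dimensional sphere or affine plane $S$ as an intersection $S = N^{p,0} \cap h$, where $h \subset \mathbb{R}P^{p+q+1}$ is a $(d+1)$-dimensional projective subspace whose underlying $(d+2)$-dimensional vector subspace $W \subset \mathbb{R}^{p+1,1}$ carries a nondegenerate and indefinite restriction $\hat{Q}|_W$. The key point is that this pins down the signature of $\hat{Q}|_W$ completely: since the negative index of $\hat{Q}$ equals $1$, any nondegenerate indefinite restriction to a $(d+2)$-dimensional subspace must have signature $(d+1,1)$.

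Next, given two $d$-dimensional spheres or affine planes $S$ and $S'$ realized as $N^{p,0}\cap h$ and $N^{p,0}\cap h'$, with underlying vector subspaces $W$ and $W'$, I note that $\hat{Q}|_W$ and $\hat{Q}|_{W'}$ both have signature $(d+1,1)$ and are therefore isometric. Choose an isometric isomorphism $\phi : W \to W'$. By Witt's Extension Theorem (Lemma \ref{Witt}), applied to the nondegenerate form $\hat{Q}$ on $\mathbb{R}^{p+1,1}$, the map $\phi$ extends to an isometry $M \in O(p+1,1)$ with $M(W)=W'$. Passing to projective space, $M$ maps $h$ onto $h'$; since $M \in O(p+1,q+1)$ preserves $N^{p,q}$, it carries $N^{p,0}\cap h$ onto $N^{p,0}\cap h'$, i.e. $M \cdot S = S'$. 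This proves transitivity.

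The main obstacle is the rigidity of the signature: everything hinges on the observation that the nondegeneracy and indefiniteness furnished by Lemma \ref{codim1}, combined with the fact that the ambient negative index is exactly $1$, forces all the relevant subspaces $W$ to share the single signature $(d+1,1)$. Once this is established, Witt's theorem supplies the required element of $O(p+1,q+1)$ with no further effort, and the only remaining bookkeeping is the routine translation between projective subspaces and their underlying linear subspaces.
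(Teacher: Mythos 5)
Your proof is correct and follows essentially the same route as the paper: realize each sphere or affine plane via Lemma \ref{codim1}, then apply Witt's Extension Theorem (Lemma \ref{Witt}) to an isometry of the underlying subspaces. In fact you make explicit a point the paper leaves implicit --- that nondegeneracy, indefiniteness, and the ambient negative index being $1$ force all the relevant subspaces to have signature $(d+1,1)$, which is exactly why the two restrictions are isometric.
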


\begin{proof}
The group $O(p+1,q+1)$ acts on $\mathbb{R}P^{p+q+1}$ linearly and maps
$N^{p,q}$ into itself, hence maps the intersections $N^{p,q} \cap h$,
where $h$ is a projective subspace of dimension $d+1$,
into intersections of the same kind preserving the signature of $\hat{Q}|_h$.
This proves that $O(p+1,q+1)$ maps $d$-dimensional spheres and affine planes
into spheres and affine planes of the same dimension.

To prove the transitivity of the action, consider two spheres or affine planes
of dimension $d$ realized as intersections $N^{p,q} \cap h$ and $N^{p,q} \cap h'$
respectively.
Since the restrictions $\hat{Q}|_h$ and $\hat{Q}|_{h'}$ have the same signature,
there is an isometric isomorphism $m: h \to h'$ of the underlying vector spaces.
By Witt's Extension Theorem (Lemma \ref{Witt}), there exists an
$M \in O(p+1,q+1)$ extending this isomorphism.
This $M$ maps the first sphere or affine plane into the second.
\end{proof}


Generalizing the above results to arbitrary $\mathbb{R}^{p,q}$ requires
an analogue of lower dimensional hyperboloids and affine planes in $N^{p,q}$.
Inspired by the proof of Theorem \ref{main-thm2}, we can define a
{\em $d$-dimensional conformal quadratic surface in $N^{p,q}$} as the
intersection of $N^{p,q}$ with a projective subspace $h$ of dimension $d+1$ in
$\mathbb{R}P^{p+q+1}$, provided that this intersection indeed has dimension $d$,
where $d=1,\dots,p+q-1$.
As an immediate consequence of the proof of Lemma \ref{codim1}, we obtain:

\begin{lem}
Let $h$ be a projective subspace of $\mathbb{R}P^{p+q+1}$ of dimension $d+1$.
Then the intersection $N^{p,q} \cap h$ has dimension $d$ if and only if the
restriction of $\hat{Q}$ to the vector space underlying $h$
is indefinite (degenerate or nondegenerate) or has rank $1$.
\end{lem}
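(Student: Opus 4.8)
The plan is to reduce the statement to a purely linear-algebraic computation of the dimension of an isotropic cone, which is precisely the computation already carried out in the proof of Lemma~\ref{codim1}, only now without the assumption that $p$ or $q$ is $0$. First I would pass from the projective subspace $h$ to the $(d+2)$-dimensional linear subspace $W \subset \mathbb{R}^{p+1,q+1}$ with $\mathbb{P}(W)=h$. Under the quotient map $\mathbb{R}^{p+1,q+1}\setminus\{0\} \twoheadrightarrow \mathbb{R}P^{p+q+1}$, the intersection $N^{p,q}\cap h$ is exactly the projectivization of the isotropic cone
$$
C = \bigl\{ w \in W ;\: \hat{Q}(w)=0 \bigr\}.
$$
Since projectivization lowers dimension by one (and sends $\{0\}$ to the empty set), $N^{p,q}\cap h$ has dimension $d$ if and only if $C$ has dimension $d+1$, that is, codimension $1$ in $W$. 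So the lemma reduces to determining exactly when the isotropic cone of $\hat{Q}|_W$ has codimension $1$.

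Next I would diagonalize $\hat{Q}|_W$ by Sylvester's law of inertia and record its signature as $(r,s,t)$, where $r$ and $s$ count the positive and negative squares and $t=\dim W-r-s$ is the dimension of the radical $W_0$. Writing $W=W'\oplus W_0$ with $\hat{Q}|_{W'}$ nondegenerate of signature $(r,s)$, the identity $\hat{Q}(w'+w_0)=\hat{Q}(w')$ gives $C=C'\oplus W_0$, where $C'$ is the isotropic cone of $\hat{Q}|_{W'}$. A short case analysis then yields the codimension of $C$ in $W$:
\begin{enumerate}
\item if $\hat{Q}|_W$ is indefinite ($r,s\ge 1$), then $C'$ is a nondegenerate quadric cone of codimension $1$ in $W'$, so $C$ has codimension $1$ regardless of $t$;
\item if $\hat{Q}|_W$ is definite ($t=0$ and $r=0$ or $s=0$), then $C=\{0\}$ and $N^{p,q}\cap h=\varnothing$;
\item if $\hat{Q}|_W$ is semidefinite of rank $k=r+s$ (one of $r,s$ being $0$ and $t\ge 1$), then $C'=\{0\}$ and $C=W_0$ has codimension $k$.
\end{enumerate}

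Comparing with the target codimension $1$, I would conclude that $N^{p,q}\cap h$ has dimension $d$ exactly in the indefinite case (the first item, degenerate or not) and in the semidefinite case of rank $k=1$ (the third item with $k=1$); in every other case the codimension is either $\ge 2$ (definite, or semidefinite of rank $\ge 2$) or $0$ (the identically-zero form, where $C=W$ and $N^{p,q}\cap h=h$ has dimension $d+1$). This is precisely the stated dichotomy. Since the entire count is linear-algebraic and independent of $p$ and $q$, it holds for all signatures, and no additional input beyond the proof of Lemma~\ref{codim1} is needed. There is essentially no obstacle here; the only point requiring care is the indefinite \emph{degenerate} case, where one must split off the radical $W_0$, as above, to see that the presence of a kernel does not raise the codimension above $1$.
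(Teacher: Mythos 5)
Your proposal is correct and takes essentially the same approach as the paper: the paper obtains this lemma as an immediate consequence of the dimension count in the proof of Lemma~\ref{codim1} (indefinite $\Rightarrow$ codimension $1$, definite $\Rightarrow$ empty intersection, semidefinite $\Rightarrow$ codimension equal to the rank), which is exactly your case analysis. Your explicit splitting $W = W' \oplus W_0$ off the radical, and your treatment of the rank-$0$ case, simply fill in details that the paper leaves implicit.
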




We denote the set of $d$-dimensional conformal quadratic surfaces in $N^{p,q}$
by $\hat{\cal H}_d$ (then $\hat{\cal H}_{p+q-1} = \hat{\cal H}$).
As the following example shows, the $(d+1)$-dimensional projective subspace
$h$ is no longer uniquely determined by $H \in \hat{\cal H}_d$.

\begin{ex}
Consider $N^{2,2} \subset \mathbb{R}P^5$ and $d=1$.
Choose four vectors in $\mathbb{R}^{3,3}$:
$$
\overrightarrow{v_1} = (1,0,0,0,0,0), \quad
\overrightarrow{v_2} = (0,1,0,0,1,0), \quad
\overrightarrow{v_3} = (0,0,1,1,0,0), \quad
\overrightarrow{v_4} = (0,0,0,0,0,1),
$$
and let
$$
V_1 = \operatorname{Span} \{ \overrightarrow{v_1},
\overrightarrow{v_2}, \overrightarrow{v_3} \}, \qquad
V_2 = \operatorname{Span} \{ \overrightarrow{v_2},
\overrightarrow{v_3}, \overrightarrow{v_4} \}, \qquad
W = V_1 \cap V_2 = \operatorname{Span} \{ \overrightarrow{v_2},
\overrightarrow{v_3} \}.
$$
Then $\hat{Q}|_{V_1}$ is positive semidefinite of rank $1$,
$\hat{Q}|_{V_2}$ is negative semidefinite of rank $1$ and
$$
N^{2,2} \cap [V_1] = N^{2,2} \cap [V_2] = [W] \subset \mathbb{R}P^5,
$$
i.e. the conformal quadratic curve $[W] \in \hat{\cal H}_1$ can be realized
as an intersection $N^{2,2} \cap h$ with different -- in fact infinitely many --
projective subspaces $h \subset \mathbb{R}P^5$ of dimension $2$.
Moreover, the restrictions $\hat{Q}|_h$ may have different signatures.
\end{ex}

Keeping in mind this example, we can state the following generalization of
Theorems \ref{main-thm} and \ref{main-thm2};
its proof is the same as that of Theorem \ref{main-thm2}.

\begin{thm}  \label{main-thm3}
The action of $O(p+1,q+1)$ on $N^{p,q}$ preserves each $\hat{\cal H}_d$,
where $d=1,\dots,p+q-1$.
Furthermore, two conformal quadratic surfaces $H$ and $H'$ in $\hat{\cal H}_d$
are in the same orbit if and only if they can be realized as intersections
$N^{p,q} \cap h$ and $N^{p,q} \cap h'$ respectively with
restrictions $\hat{Q}|_h$ and $\hat{Q}|_{h'}$ having the same signature.
\end{thm}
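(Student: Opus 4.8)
The plan is to follow the template of the proof of Theorem \ref{main-thm2}, splitting the statement into a preservation claim and an orbit characterization. For the preservation claim, I would note that every $M \in O(p+1,q+1)$ acts linearly on $\mathbb{R}^{p+1,q+1}$, hence carries a projective subspace $h \subset \mathbb{R}P^{p+q+1}$ of dimension $d+1$ to another projective subspace $M(h)$ of the same dimension, and satisfies $M(N^{p,q} \cap h) = N^{p,q} \cap M(h)$ because $M$ preserves $N^{p,q}$. Since $M$ is an isometry of $(\mathbb{R}^{p+1,q+1}, \hat{Q})$, the restriction $\hat{Q}|_{M(h)}$ has the same signature as $\hat{Q}|_h$; by the preceding lemma the dimension of $N^{p,q} \cap h$ depends only on which signature type $\hat{Q}|_h$ falls into, so $N^{p,q} \cap M(h)$ again has dimension $d$. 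Thus $M$ sends $\hat{\cal H}_d$ into itself.

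For the orbit characterization I would prove both implications. In the forward direction, suppose $M \cdot H = H'$ and fix any realization $H = N^{p,q} \cap h$ with $h$ of projective dimension $d+1$; setting $h' = M(h)$ gives $H' = N^{p,q} \cap h'$, and because $M$ is an isometry the restrictions $\hat{Q}|_h$ and $\hat{Q}|_{h'}$ have identical signature, which exhibits the desired matching pair of realizations. For the converse, suppose $H = N^{p,q} \cap h$ and $H' = N^{p,q} \cap h'$ with $\hat{Q}|_h$ and $\hat{Q}|_{h'}$ of the same signature. The vector spaces underlying $h$ and $h'$ both have dimension $d+2$, so equality of signatures makes them isometric as quadratic spaces, producing an isometric isomorphism $m$ between them. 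Witt's Extension Theorem (Lemma \ref{Witt}) extends $m$ to some $M \in O(p+1,q+1)$; since $M$ carries the span of $h$ onto the span of $h'$, we obtain $M(h) = h'$ and therefore $M \cdot H = N^{p,q} \cap M(h) = H'$.

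The step I expect to demand the most care is the interplay with the non-uniqueness of the realizing subspace $h$, illustrated by the preceding example. Because a single $H \in \hat{\cal H}_d$ can arise from subspaces whose restrictions $\hat{Q}|_h$ carry \emph{different} signatures, the orbit criterion cannot be phrased as an intrinsic invariant of $H$; it must be stated existentially, as ``there exist realizations with matching signature.'' Respecting this forces the two subtleties above: in the forward direction I must transport one \emph{chosen} realization of $H$ by $M$ rather than compare intrinsic data, and in the converse I must check that agreement of signatures together with the common ambient dimension $d+2$ is precisely what the classification of real quadratic spaces needs to supply the isometry feeding into Witt's theorem. Once the existential formulation is in place, the argument is formally identical to that of Theorem \ref{main-thm2}.
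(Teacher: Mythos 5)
Your proposal is correct and follows essentially the same route as the paper, which simply declares that the proof of Theorem \ref{main-thm3} ``is the same as that of Theorem \ref{main-thm2}'': linearity of the action plus preservation of $N^{p,q}$ gives the preservation claim, isometry-invariance of the signature gives the forward direction, and the classification of real quadratic spaces together with Witt's Extension Theorem (Lemma \ref{Witt}) gives the converse. Your explicit attention to the non-uniqueness of the realizing subspace $h$ and the resulting existential phrasing of the criterion is exactly the point the paper's preceding example is meant to flag.
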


\end{document}